\documentclass[11pt]{article}

\usepackage[a4paper,margin=27mm]{geometry}
\usepackage{amsmath,amssymb,amsthm,mathtools}
\usepackage{microtype}
\usepackage{enumitem}
\usepackage{booktabs}
\usepackage[hidelinks]{hyperref}

\newtheorem{theorem}{Theorem}[section]
\newtheorem{proposition}[theorem]{Proposition}
\newtheorem{lemma}[theorem]{Lemma}
\newtheorem{corollary}[theorem]{Corollary}
\newtheorem{remark}[theorem]{Remark}
\newtheorem{definition}[theorem]{Definition}

\newcommand{\F}{\mathbb F}
\newcommand{\Z}{\mathbb Z}
\newcommand{\sltwo}{\mathfrak{sl}_2}
\newcommand{\Ad}{\operatorname{Ad}}
\newcommand{\ord}{\operatorname{ord}}
\newcommand{\tr}{\operatorname{tr}}
\newcommand{\rank}{\operatorname{rank}}
\newcommand{\Cent}{\operatorname{Cent}}
\newcommand{\im}{\operatorname{Im}}
\newcommand{\PRE}{\mathrm{PRE}}
\newcommand{\ACT}{\mathrm{ACT}}

\title{\textbf{Adjoint Transfer and Order-Lifting Dichotomies\\
for \(2\times2\) Matrices over Prime-Power Rings}}
\author{Ivo Vajc\\
\small Independent researcher\\
\small ORCID: 0009-0005-5145-6491}
\date{11 October 2026}

\begin{document}
\maketitle

\begin{abstract}
For an odd prime $p$, we study the change in matrix order under
lifting from $\mathbb Z/p^k\mathbb Z$ to $\mathbb Z/p^{k+1}\mathbb Z$.
The first variation of the order defect is an affine translate of a
cyclic adjoint transfer on $\mathfrak{sl}_2(\mathbb F_p)$.
We classify the full-order transfer for every $q\in GL_2(\mathbb F_p)$:
its ranks are $3$ for scalar elements, $1$ for nonscalar semisimple
elements, $1$ for nonscalar Jordan elements when $p=3$, and $0$ for
nonscalar Jordan elements when $p>3$.
For a Jordan element $q=\lambda(I+U)$, $U^2=0\ne U$, the vanishing
first-order transfer for $p>3$ admits an exact higher-order refinement.
Writing $d=(p-1)/2$, the full-order perturbation has depth $d$ when
$\operatorname{tr}(UX)\ne0$, depth $p-1$ when
$\operatorname{tr}(UX)=0$ and $X\notin\langle U\rangle$, and vanishes
identically for $X\in\langle U\rangle$.
For $p=3$, among the $27$ lifts of a nonscalar order-preserving parent
with determinant fixed to $+1$ or $-1$, exactly $9$ preserve its order
and $18$ triple it; after the first increase, every further lift
triples the order. We also give a scalar obstruction criterion for
order preservation among unrestricted Jordan lifts.
\end{abstract}

\medskip
\noindent\textbf{2020 Mathematics Subject Classification.}
Primary 20H25; Secondary 20H05, 20C20.

\smallskip
\noindent\textbf{Keywords.}
matrix order lifting; prime-power rings; congruence filtration;
adjoint transfer; modular representation; characteristic three;
Jordan perturbation; quadratic character; order defect.

\section{Introduction}

Let \(p\) be an odd prime.  A basic lifting problem asks how the order of
a matrix modulo \(p^k\) changes when the matrix is lifted to modulus
\(p^{k+1}\).  The purpose of this paper is to organize this question
through a tangent-space description of determinant-compatible lifts.

The relevant tangent space is \(\sltwo(\F_p)\).  The first-order change
of the order defect is not a bare linear invariant: it is an
\emph{affine} translate of a cyclic adjoint transfer.  This distinction
is essential for obtaining exact fiber counts.

The central phenomenon is especially rigid in characteristic \(3\).
For every non-scalar reduction the relevant full-order transfer has rank one.
Consequently a three-dimensional tangent space splits into affine
fibers of size \(3^2=9\), producing the exact count
\[
27=9+18.
\]
After the first order increase, a periodic-transfer cancellation and a
prime-power propagation identity make the activated state absorbing.

The vanishing of the Jordan transfer for \(p>3\) is only a first-order
statement.  A higher-order analysis developed below shows that the dormant
Jordan signal reappears at the exact generic perturbative depth
\[
 d=\frac{p-1}{2}.
\]
At that degree the response is confined to the nilpotent centralizer
axis and has one-dimensional image span.  For \(p>3\) this critical
response is generally nonlinear as a function of \(X\), so no linear-rank
claim is intended.  Its amplitude is controlled by the quadratic character
of the transverse tangent coordinate.  This identifies characteristic \(3\) not
as an isolated anomaly, but as the depth-one endpoint of a uniform
odd-characteristic law.

The two principal results are the global transfer classification and the
exact higher-order Jordan depth law, together with the characteristic-three
fixed-determinant lifting theorem. Their proofs are given below.

\subsection*{Relation to previous work}
The congruence tangent space and its traceless Lie-algebra structure
are classical \cite{LeeSzczarba1975,Lopez2014}. Modular cyclic norms
and unipotent Jordan modules are standard tools in representation theory
\cite{Alperin1986,Benson1998,McNinch2002}; related adjoint-orbit
problems over finite quotient rings are studied in \cite{Zordan2016}.
Fibonacci periods and companion-matrix methods provide historical
motivation, rather than a novelty claim
\cite{Wall1960,Miles1960,BragmanRowland2025}.
The results below concern the specific affine order-defect formula,
its exact lifting counts, and the higher-order Jordan perturbation
formula. The references are not an exhaustive priority search.

\section{Congruence lifts and order defects}

Fix an odd prime \(p\).  Throughout we use the convention
\[
 \Ad_g(X):=gXg^{-1}.
\]
Thus \(\Ad_{q^{-j}}(X)=q^{-j}Xq^j\), consistently with the transfer sums below.
Let
\[
G_{p^k}=\{L\in GL_2(\Z/p^k\Z):\det L\in\{+1,-1\}\}.
\]
We consider compatible lifts from \(G_{p^k}\) to \(G_{p^{k+1}}\).

Here $\det L\in\{+1,-1\}$ is an equality of residue classes in $\Z/p^k\Z$; it does not require a chosen integer representative of $L$ to have integer determinant exactly $+1$ or $-1$.

Let \(q\in GL_2(\F_p)\) be non-scalar.  Define
\[
 A_q=q-\frac{\tr q}{2}I,
 \qquad
 \Delta(q)=(\tr q)^2-4\det q.
\]
Cayley--Hamilton gives
\begin{equation}
 A_q^2=\frac{\Delta(q)}4 I.
 \label{eq:A-square}
\end{equation}

For an integer \(r\ge1\), define the length-\(r\) adjoint sum
\[
 N_q^{(r)}(X)=\sum_{j=0}^{r-1}q^{-j}Xq^j,
 \qquad X\in\sltwo(\F_p).
\]
When \(\rho(q)=\ord(\Ad_q)\), we call
\[
 \mathcal N_q=N_q^{(\rho(q))}
\]
the adjoint transfer over one adjoint period.

\begin{lemma}[Matrix order versus adjoint period]
Let
\[
 r_0=\ord(q),\qquad \rho=\ord(\Ad_q).
\]
Then \(\rho\mid r_0\), \(q^\rho\) is scalar, and
\begin{equation}
 N_q^{(r_0)}
 =
 \frac{r_0}{\rho}\,\mathcal N_q
 \quad\text{in }\operatorname{End}_{\F_p}(\sltwo(\F_p)).
 \label{eq:full-vs-adjoint}
\end{equation}
More generally, for every multiple \(r=s\rho\),
\[
 N_q^{(r)}=s\,\mathcal N_q.
\]
\end{lemma}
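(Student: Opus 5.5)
Since $q^{r_0}=I$, the conjugation $\Ad_q^{r_0}=\Ad_{q^{r_0}}$ is the identity, so the order $\rho$ of $\Ad_q$ in the group of automorphisms of $\sltwo(\F_p)$ divides $r_0$. It is cleaner to regard $\Ad_q$ here as acting on $\sltwo(\F_p)$, which is the domain of $N_q^{(r)}$.

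Next I show that $q^\rho$ is scalar. By definition of $\rho$, $q^\rho$ commutes with every $X\in\sltwo(\F_p)$. Since $p$ is odd, every $2\times2$ matrix decomposes as $Y=\tfrac{\tr Y}{2}I+(Y-\tfrac{\tr Y}{2}I)$, and the second summand lies in $\sltwo(\F_p)$. Hence $q^\rho$ commutes with all of $M_2(\F_p)$, and therefore lies in the center, namely the scalars. The oddness of $p$ is used exactly here, and this is the only point that needs care. I expect no real obstacle in the lemma. The remaining subtlety is to make sure $\rho$ is defined via the action on $\sltwo$ and not on $M_2$; these agree, by the same decomposition.

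For the sum identity, take $r=s\rho$ and split the range $0\le j<s\rho$ into $s$ blocks $j=t\rho+i$ with $0\le t<s$ and $0\le i<\rho$. Then
\[
q^{-(t\rho+i)}Xq^{t\rho+i}=\Ad_{q^{-i}}\bigl(\Ad_{q^{-t\rho}}(X)\bigr)=q^{-i}Xq^{i},
\]
since $\Ad_q^{\rho}=\mathrm{id}$, or equivalently because $q^{t\rho}$ is scalar and scalars cancel under conjugation. Each block therefore contributes $\mathcal N_q(X)$, so $N_q^{(s\rho)}=s\,\mathcal N_q$ as endomorphisms, with $s$ read in $\F_p$. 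Specializing to $s=r_0/\rho$, an integer by the first step, gives \eqref{eq:full-vs-adjoint}.
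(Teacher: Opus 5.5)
Your proof is correct and follows the paper's argument step for step. You get $\rho\mid r_0$ from $\Ad_{q^{r_0}}=\mathrm{id}$, scalarity of $q^\rho$ from the odd-$p$ decomposition $M_2(\F_p)=\F_p I\oplus\sltwo(\F_p)$ together with the center of $M_2(\F_p)$, and $N_q^{(s\rho)}=s\,\mathcal N_q$ by splitting the sum into $s$ identical blocks of length $\rho$. Your remark that $\Ad_q$ has the same period on $\sltwo(\F_p)$ and on $M_2(\F_p)$ is a harmless clarification that the paper leaves implicit.
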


\begin{proof}
The equality \(\Ad_q^\rho=1\) means that \(q^\rho\) commutes with every
traceless matrix. Since $p$ is odd, every $2\times2$ matrix is a sum
of a traceless matrix and a scalar matrix (subtract half its trace).
Thus $q^\rho$ commutes with the entire matrix algebra. Its center
consists precisely of scalar matrices, so \(q^\rho\) is scalar.  Since \(q^{r_0}=I\), the adjoint
period divides \(r_0\).  The length-\(r_0\) sum decomposes into
\(r_0/\rho\) consecutive blocks of length \(\rho\), and every block is
the same because \(\Ad_q^\rho=1\).  This gives
\eqref{eq:full-vs-adjoint}.
\end{proof}

\begin{corollary}[No period-collapse over \(\F_p\)]
Let \(p\) be prime, \(q\in GL_2(\F_p)\), and
\[
 r_0=\ord(q),\qquad \rho=\ord(\Ad_q).
\]
Then
\[
 s:=\frac{r_0}{\rho}
\]
is the multiplicative order of the scalar \(q^\rho\in\F_p^\times I\).
Consequently
\[
 s\mid(p-1),
 \qquad
 p\nmid s,
\]
and
\[
 N_q^{(r_0)}=s\,\mathcal N_q
\]
has the same kernel, image, and rank as \(\mathcal N_q\).
\end{corollary}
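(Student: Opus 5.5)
By the preceding Lemma, \(\rho\mid r_0\) and \(q^\rho=cI\) for some scalar \(c\in\F_p^\times\). I will first show that \(s=r_0/\rho\) equals \(\ord(c)\), the multiplicative order of \(c\) in \(\F_p^\times\). Everything else will then follow from the Lemma together with elementary facts about \(\F_p^\times\).

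For the order identification, I will use that \((q^\rho)^m=c^mI\) equals \(I\) exactly when \(c^m=1\). Hence \(\ord(q^\rho)=\ord(c)\). Since \(\rho\mid r_0=\ord(q)\), the standard cyclic-group formula gives
\[
\ord(q^\rho)=\frac{r_0}{\gcd(r_0,\rho)}=\frac{r_0}{\rho}=s .
\]
This gives \(s=\ord(c)\). I will also note that the identity \(N_q^{(r_0)}=s\,\mathcal N_q\) is exactly \eqref{eq:full-vs-adjoint}, so no further work is needed there.

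The divisibility claims are then immediate. By Lagrange's theorem in the group \(\F_p^\times\) of order \(p-1\), we get \(s\mid p-1\). Since \(p\nmid p-1\), it follows that \(p\nmid s\).

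For the last claim, \(p\nmid s\) means \(s\) is a unit in \(\F_p\). Multiplication by \(s\) is therefore an invertible scalar endomorphism of \(\sltwo(\F_p)\). Composing \(\mathcal N_q\) with this invertible scalar changes neither its kernel nor its image, and hence not its rank.

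I expect no real obstacle here. The only point needing care is the step \(\ord(q^\rho)=r_0/\rho\), which uses \(\rho\mid r_0\) from the Lemma rather than only the general gcd formula.
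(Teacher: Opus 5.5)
Your proposal is correct and follows essentially the paper's route. The paper proves $s=\ord(c)$ by noting $c^s=1$ and then invoking the minimality of $r_0$, which is exactly what your formula $\ord(q^\rho)=r_0/\gcd(r_0,\rho)=r_0/\rho$ encodes; the remaining steps (Lagrange in $\F_p^\times$, $p\nmid s$, and invariance of kernel, image and rank under a nonzero scalar) match the paper's argument.
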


\begin{proof}
By the preceding lemma, \(q^\rho=cI\) for some
\(c\in\F_p^\times\), and \(\rho\mid r_0\).  Write \(r_0=\rho s\).
Then
\[
 c^sI=q^{\rho s}=q^{r_0}=I.
\]
If \(c^t=1\) for some \(0<t<s\), then
\(q^{\rho t}=I\), contradicting the minimality of \(r_0\).
Hence \(s=\ord(c)\).  Since \(c\in\F_p^\times\), its order divides
\(p-1\), so \(s\) is nonzero in \(\F_p\).  Equation
\eqref{eq:full-vs-adjoint} therefore differs from \(\mathcal N_q\)
only by multiplication by a nonzero field scalar, which preserves
kernel, image, and rank.
\end{proof}

\begin{remark}
The distinction between the matrix order \(r_0\) and the adjoint
period \(\rho\) remains notationally and structurally important, but
over \(GL_2(\F_p)\) it cannot produce a rank collapse: the repetition
factor \(r_0/\rho\) always lies in \(\F_p^\times\).
For \(p=3\) this specializes to \(r_0/\rho\in\{1,2\}\).
\end{remark}

\subsection{Determinant-compatible tangent lifts}

\begin{lemma}[Tangent torsor]
Fix a determinant-compatible reference lift \(C_0\) from modulus
\(p^k\) to \(p^{k+1}\).  Every determinant-compatible lift over the same
parent can be written uniquely in the form
\[
 C=C_0(I+p^kX)\pmod{p^{k+1}},
 \qquad X\in\sltwo(\F_p).
\]
Hence there are \(p^3\) such tangent lifts.
\end{lemma}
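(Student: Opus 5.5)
The plan has two parts. First, every lift of the parent can be written as $C_0(I+p^kY)$ for a unique $Y\in M_2(\F_p)$. Second, the determinant condition forces $\tr Y=0$, which cuts the $p^4$ lifts of the parent down to $p^3$.

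\emph{Lifts form a torsor under $M_2(\F_p)$.} Let $L\in G_{p^k}$ be the parent, and let $C_0$ be a determinant-compatible reference lift, i.e.\ a lift of $L$ modulo $p^{k+1}$ with $\det C_0\in\{+1,-1\}$ in $\Z/p^{k+1}\Z$. The lifts of $L$ to $M_2(\Z/p^{k+1}\Z)$ are exactly the matrices $C_0+p^kZ$ with $Z\in M_2(\F_p)$. This is well defined because $p^k\cdot p\equiv0$ modulo $p^{k+1}$, so $p^kZ$ depends only on $Z$ modulo $p$.

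Since $C_0$ is invertible modulo $p^{k+1}$, we can write $C_0+p^kZ=C_0(I+p^kY)$ with $Y=\bar C_0^{-1}Z$. Here $\bar C_0$ is the reduction of $C_0$ modulo $p$, which lies in $GL_2(\F_p)$ because $\det C_0$ is a unit. The map $Z\mapsto Y$ is a bijection of $M_2(\F_p)$, and the product $C_0\cdot p^kY$ modulo $p^{k+1}$ depends only on $\bar C_0$. This gives both existence and uniqueness of $Y$ for each lift.

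\emph{The determinant condition is $\tr Y=0$.} Because $(p^k)^2\equiv0$ modulo $p^{k+1}$ for $k\ge1$, we have
\[
\det(I+p^kY)=1+p^k\tr Y+p^{2k}\det Y\equiv 1+p^k\tr Y \pmod{p^{k+1}}.
\]
Hence $\det C=\det C_0\,(1+p^k\tr Y)$.

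Suppose $\det C\in\{\pm1\}$. Both $\det C$ and $\det C_0$ reduce to the same value $\det L$ modulo $p^k$. Since $p$ is odd, $+1\not\equiv-1$ modulo $p$, so $\det C$ and $\det C_0$ must be the same sign, say $\varepsilon$. Then $\varepsilon=\varepsilon(1+p^k\tr Y)$, which forces $p^k\tr Y\equiv0$ modulo $p^{k+1}$, i.e.\ $\tr Y=0$ in $\F_p$.

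Conversely, if $\tr Y=0$, then $\det C=\det C_0\in\{\pm1\}$, so $C$ is determinant-compatible.

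\emph{Conclusion.} The determinant-compatible lifts of the parent correspond bijectively to $X:=Y\in\sltwo(\F_p)$. Since $\sltwo(\F_p)$ has dimension $3$, there are $p^3$ of them.

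\emph{Points to watch.}
\begin{itemize}[nosep]
\item \textbf{Where oddness of $p$ is used.} It guarantees that the sign $\pm1$ is determined by the parent, so the lifts with determinant $+1$ and those with $-1$ do not mix. Without it the count could double.
\item \textbf{Existence of $C_0$.} The statement fixes $C_0$ as given. If needed, existence follows by the same computation: take any lift $C_1$, write $\det C_1=\varepsilon(1+p^kt)$, and replace $C_1$ by $C_1\bigl(I+p^k\,\mathrm{diag}(-t,0)\bigr)$, whose determinant is then $\varepsilon$.
\end{itemize}
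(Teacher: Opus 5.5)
Your proof is correct and follows the paper's argument: write each lift as a relative correction $C_0(I+p^kX)$, then use $\det(I+p^kX)\equiv 1+p^k\tr X\pmod{p^{k+1}}$ to reduce determinant compatibility to $\tr X=0$. You also make explicit two points the paper leaves implicit: the map $Z\mapsto\bar C_0^{-1}Z$ gives existence and uniqueness of the correction, and oddness of $p$ forces $\det C=\det C_0$ rather than merely $\det C\in\{+1,-1\}$.
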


\begin{proof}
Modulo \(p^{k+1}\), a relative correction is \(I+p^kX\).
The first-order determinant formula gives
\[
 \det(I+p^kX)\equiv1+p^k\tr X\pmod{p^{k+1}}.
\]
Determinant compatibility is therefore equivalent to
\(\tr X=0\) in \(\F_p\).  Since \(\dim_{\F_p}\sltwo=3\), there are
\(p^3\) possibilities.
\end{proof}

\subsection{Order defect and its first variation}

Let \(m=p^k\), let \(C\) be a lift, and suppose the parent order is \(r\).
Whenever
\[
 C^r\equiv I\pmod m,
\]
define the order defect
\[
 D_r(C)=\frac{C^r-I}{m}\pmod p.
\]

\begin{proposition}[Exact one-step order-defect dichotomy]
Let \(m=p^k\), let the reduction of \(C\) modulo \(m\) have exact
order \(r\), and suppose
\[
 C^r\equiv I+mD\pmod{pm}.
\]
Then the order of \(C\) modulo \(pm\) is either \(r\) or \(pr\), and
\[
 D=0\quad\Longleftrightarrow\quad \ord_{pm}(C)=r,
\]
while
\[
 D\ne0\quad\Longleftrightarrow\quad \ord_{pm}(C)=pr.
\]
\end{proposition}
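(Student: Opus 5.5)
The plan is to bound the order from both sides and then use the dichotomy to split into the two cases. Write $n=\ord_{pm}(C)$. Reduction modulo $m$ is a group homomorphism $GL_2(\Z/pm\Z)\to GL_2(\Z/m\Z)$. Since $C^n\equiv I\pmod{pm}$ forces $C^n\equiv I\pmod m$, the exact order $r$ of the reduction divides $n$. This gives the lower bound $r\mid n$.

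For the upper bound, I compute $C^{pr}$ from the hypothesis $C^r\equiv I+mD\pmod{pm}$. Lift this to an honest identity $C^r=I+mE$ with $E$ an integer matrix over $\Z/pm\Z$ reducing to $D$ modulo $p$. Because $I$ commutes with $mE$, the binomial theorem applies:
\[
(I+mE)^p=I+p\,mE+\binom p2 m^2E^2+\cdots+m^pE^p .
\]
Every term after $I$ is divisible by $pm$. The linear term carries the factor $p$ explicitly. The middle terms carry $\binom pj m^j$ with $p\mid\binom pj$ for $0<j<p$. The last term is $m^p$, and $m^p$ is divisible by $pm$ because $m=p^k$ with $k\ge1$, so $m^{p-1}$ is divisible by $p$. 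Hence $C^{pr}\equiv I\pmod{pm}$, and $n\mid pr$.

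Combining the two bounds, $r\mid n\mid pr$. Writing $n=rt$, we get $t\mid p$, so $t\in\{1,p\}$ and $n\in\{r,pr\}$.

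For the dichotomy, $n=r$ holds exactly when $C^r\equiv I\pmod{pm}$, that is, when $mD\equiv0\pmod{pm}$. This is equivalent to $D\equiv0\pmod p$, which is $D=0$ in $M_2(\F_p)$. So $D=0$ if and only if $n=r$. Otherwise the only remaining possibility is $n=pr$, and this gives the second equivalence.

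None of the steps is a real obstacle. The only points needing care are that the binomial expansion is valid because $I$ is central, and that the last term $m^p$ is divisible by $pm$; both are immediate for $k\ge1$. The argument does not even use that $p$ is odd.
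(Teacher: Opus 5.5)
Your proof is correct and follows the paper's argument: $r$ divides the order modulo $pm$, the binomial expansion of $(I+mD)^p$ gives $C^{pr}\equiv I\pmod{pm}$, and $D=0$ is exactly the condition $C^r\equiv I\pmod{pm}$. You are more explicit than the paper about why the top term $m^p$ is divisible by $pm$. This needs $k\ge1$, which the paper leaves implicit.
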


\begin{proof}
The order modulo \(pm\) is a multiple of the exact order \(r\) modulo
\(m\).  On the other hand,
\[
 (I+mD)^p\equiv I\pmod{pm},
\]
because every nonconstant term is divisible by \(pm=p^{k+1}\).
Hence \(C^{pr}\equiv I\pmod{pm}\), so the only possibilities are
\(r\) and \(pr\).  The first occurs exactly when \(C^r\equiv I\pmod{pm}\),
equivalently when \(D=0\).
\end{proof}

\begin{proposition}[Affine first variation]
Let
\[
 C=C_0(I+mX)\pmod{pm},
 \qquad X\in\sltwo(\F_p),
\]
and let \(q=C_0\bmod p\).  Then
\begin{equation}
 D_r(C)-D_r(C_0)=N_q^{(r)}(X).
 \label{eq:first-variation}
\end{equation}
Thus the defect family is the affine map
\[
 X\longmapsto D_0+N_q^{(r)}(X),
 \qquad D_0=D_r(C_0).
\]
\end{proposition}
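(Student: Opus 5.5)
The plan is to expand $C^r=(C_0(I+mX))^r$ modulo $pm$ and keep only the terms linear in $mX$. Every product containing two or more factors of $mX$ is divisible by $m^2=p^{2k}$. Since $k\ge1$, we have $2k\ge k+1$, so these terms vanish modulo $pm$. I would make this precise by a telescoping identity rather than a full binomial-type expansion. Write $C=C_0+mC_0X$ and consider
\[
 C^r - C_0^r=\sum_{j=0}^{r-1} C^{\,r-1-j}(C-C_0)C_0^{\,j}.
\]
Each summand has the factor $C-C_0=mC_0X$, which is divisible by $m$. Modulo $pm$ we may therefore replace $C^{r-1-j}$ by $C_0^{r-1-j}$, because $C\equiv C_0\pmod m$ and the extra factor $m$ upgrades this congruence to one modulo $pm$.

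This gives
\[
 C^r\equiv C_0^r+m\sum_{j=0}^{r-1}C_0^{\,r-j}XC_0^{\,j}\pmod{pm}.
\]
Next I factor out $C_0^r$ on the left:
\[
 \sum_{j=0}^{r-1}C_0^{\,r-j}XC_0^{\,j}=C_0^r\sum_{j}C_0^{-j}XC_0^{j}.
\]
This uses that $C_0$ is invertible modulo $p^{k+1}$, which holds since its determinant is $\pm1$. Modulo $pm$ the term already carries the factor $m$, so the sum only matters modulo $p$. There $C_0^r\equiv q^r=I$, since $q$ is the reduction of the parent, whose order is $r$. Hence the correction term is $m\sum_{j=0}^{r-1}q^{-j}Xq^j=mN_q^{(r)}(X)$.

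We also need $D_r(C)$ to be defined, i.e. $C^r\equiv I\pmod m$. This holds because $C\equiv C_0\pmod m$ and $C_0^r\equiv I\pmod m$. We then get
\[
 C^r-I\equiv (C_0^r-I)+mN_q^{(r)}(X)\pmod{pm}.
\]
Dividing by $m$ and reducing modulo $p$ gives $D_r(C)=D_r(C_0)+N_q^{(r)}(X)$, which is \eqref{eq:first-variation}. The affine description follows immediately.

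The only step needing care is justifying the reduction from $C_0^{r-j}$ to $q^{-j}$. We have $C_0^{r-j}=C_0^rC_0^{-j}$, and $C_0^r\equiv I\pmod m$ gives $C_0^r\equiv I\pmod p$ because $k\ge1$. We also need the division by $m$ to be well defined on classes modulo $pm$. This is routine, since $m\mid C^r-I$ and the quotient is taken modulo $p$. I expect no real obstacle beyond keeping the left and right factors in the order matching the convention $\Ad_{q^{-j}}(X)=q^{-j}Xq^j$.
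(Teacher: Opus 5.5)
Your proof is correct and follows the paper's route. You expand $[C_0(I+mX)]^r$ to first order modulo $pm$, discard the terms with two factors of $mX$, and rewrite $C_0^{r-j}XC_0^{j}$ as $q^{-j}Xq^{j}$ using $q^r=I$. Your telescoping identity is only a tidier way of isolating the linear term, and your checks that $D_r(C)$ is defined and that $C_0^r\equiv I\pmod p$ supply details the paper leaves implicit.
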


\begin{proof}
Expand the product
\[
[C_0(I+mX)]^r
\]
to first order in \(m\).  Terms containing two factors \(mX\) vanish
modulo \(pm\).  Moving the single tangent insertion through successive
powers of \(C_0\) yields the conjugation sum in
\eqref{eq:first-variation}.
\end{proof}

\section{Classification of the adjoint transfer}
\subsection{The centralizer axis}

\begin{lemma}
For every non-scalar \(q\in GL_2(\F_p)\),
\[
 \Cent(q)\cap\sltwo(\F_p)=\langle A_q\rangle.
\]
\end{lemma}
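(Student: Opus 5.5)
The plan is to identify the full centralizer of a non-scalar \(q\) in the matrix algebra \(M_2(\F_p)\) and then intersect it with the traceless subspace. The classical fact I will use is that a non-scalar \(2\times2\) matrix over a field is cyclic. Indeed, since \(q\) is not scalar, some vector \(v\) is not an eigenvector. Then \(v, qv\) form a basis, so the minimal polynomial of \(q\) equals its characteristic polynomial and has degree \(2\). From cyclicity, every matrix commuting with \(q\) is a polynomial in \(q\). Concretely, if \(Yq=qY\), write \(Yv=av+b\,qv\) and set \(Z=Y-aI-bq\). Then \(Z\) commutes with \(q\) and kills \(v\), hence also kills \(qv\); so \(Z=0\). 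Thus \(\Cent(q)=\langle I,q\rangle=\langle I,A_q\rangle\), a two-dimensional space. Here I use \(A_q=q-\tfrac{\tr q}{2}I\), which is legitimate because \(p\) is odd.

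Next I intersect with \(\sltwo(\F_p)\). An element \(aI+bA_q\) has trace \(2a+b\tr A_q=2a\), since \(\tr A_q=\tr q-\tr q=0\). Because \(2\) is invertible in \(\F_p\), tracelessness forces \(a=0\). Hence \(\Cent(q)\cap\sltwo(\F_p)=\langle A_q\rangle\). Conversely, \(A_q\) is traceless and commutes with \(q\), and it is nonzero because \(q\) is non-scalar, so the span really is one-dimensional.

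The only step with any content is the cyclicity claim that the centralizer is \(\langle I,q\rangle\). The argument via a non-eigenvector \(v\) handles the semisimple split, semisimple non-split and Jordan cases uniformly. I therefore do not expect a genuine obstacle, only the care of noting where oddness of \(p\) enters, namely in dividing by \(2\) both to define \(A_q\) and to kill the scalar component.
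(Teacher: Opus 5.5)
Your proof is correct and follows the paper's route: identify $\Cent(q)$ with the two-dimensional algebra $\F_p[I,q]$, then intersect with the traceless subspace, using that $A_q$ is a nonzero traceless element. The only difference is that you prove the centralizer fact with a non-eigenvector (cyclic vector) argument and make the division by $2$ explicit, whereas the paper simply cites the centralizer fact as standard.
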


\begin{proof}
The centralizer of a non-scalar \(2\times2\) matrix is the
two-dimensional algebra \(\F_p[I,q]\).  Its traceless subspace is
one-dimensional, and \(A_q\) is a nonzero traceless element of it.
\end{proof}

\begin{proposition}[Image-axis principle]
For every complete adjoint period,
\[
 \im\mathcal N_q\subseteq\langle A_q\rangle.
\]
Consequently
\[
 \rank\mathcal N_q\le1.
\]
\end{proposition}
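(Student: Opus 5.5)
The plan is to show that every value $\mathcal N_q(X)$ commutes with $q$, and then read off the conclusion from the centralizer lemma. Three ingredients are needed: an invariance identity for the transfer, the fact that conjugation preserves trace, and the preceding lemma identifying $\Cent(q)\cap\sltwo(\F_p)$ with $\langle A_q\rangle$.

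First, write $\rho=\ord(\Ad_q)$. Conjugating the length-$\rho$ sum by $q$ shifts the summation index:
\[
 q^{-1}\mathcal N_q(X)\,q=\sum_{j=0}^{\rho-1}q^{-(j+1)}Xq^{j+1}
 =\sum_{j=1}^{\rho}q^{-j}Xq^{j}.
\]
Because $\Ad_q^\rho=1$, the term with $j=\rho$ equals the term with $j=0$. The shifted sum is therefore the same cyclic sum, and
\[
 q^{-1}\mathcal N_q(X)\,q=\mathcal N_q(X).
\]
This is the one place where completeness of the adjoint period is used. For a truncated sum $N_q^{(r)}$ with $\rho\nmid r$, the boundary terms would not cancel. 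More generally, for any $r=s\rho$ one has $N_q^{(r)}=s\,\mathcal N_q$ by the earlier lemma, so the same conclusion holds for every complete multiple of the period.

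Second, each summand $q^{-j}Xq^j$ is traceless, since conjugation preserves trace. Hence $\mathcal N_q(X)$ lies in $\sltwo(\F_p)$ and commutes with $q$.

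Third, for non-scalar $q$ the preceding lemma gives $\Cent(q)\cap\sltwo(\F_p)=\langle A_q\rangle$, so $\mathcal N_q(X)\in\langle A_q\rangle$. Since $A_q\ne0$, this line is one-dimensional, and therefore $\rank\mathcal N_q\le 1$.

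The scalar case has to be set aside. There $A_q$ is the zero matrix, while $\mathcal N_q$ is the identity because $\rho=1$. The statement is therefore meant only for non-scalar $q$, in line with the standing hypothesis of this section; the scalar case instead gives rank $3$, as recorded in the abstract. I do not expect a real obstacle. The only delicate point is the index-shift cancellation, which must use the full period $\rho$ and the convention $\Ad_g(X)=gXg^{-1}$ consistently.
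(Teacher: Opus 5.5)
Your proof is correct and is essentially the paper's argument: cyclic reindexing over the full period gives $q^{-1}\mathcal N_q(X)q=\mathcal N_q(X)$, the transfer preserves tracelessness, and the centralizer lemma $\Cent(q)\cap\sltwo(\F_p)=\langle A_q\rangle$ then gives the image bound and hence rank at most one. Your exclusion of the scalar case is also right, since the paper defines $A_q$ only for non-scalar $q$.
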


\begin{proof}
Cyclically shifting the transfer sum shows
\[
 q^{-1}\mathcal N_q(X)q=\mathcal N_q(X).
\]
The transfer also preserves tracelessness.  Its image therefore lies
in \(\Cent(q)\cap\sltwo(\F_p)=\langle A_q\rangle\).
\end{proof}

\subsection{Transfer classification and explicit operator formulas}

We now sharpen the rank classification to an explicit classification of the
full-order operator
\[
 N_q:=N_q^{(r_0)},\qquad r_0=\ord(q).
\]
The earlier adjoint-period transfer \(\mathcal N_q\) remains useful for the
order-lifting argument, while the formulas below identify the full matrix-order
transfer itself.

\subsubsection{Nonscalar semisimple case}

Assume \(\Delta(q)\ne0\), and put
\[
 Z_q=A_q=q-\frac{\tr(q)}2I.
\]
By \eqref{eq:A-square},
\[
 \tr(Z_q^2)=\frac{\Delta(q)}2\ne0.
\]

\begin{lemma}[Semisimple fixed line]
If \(q\) is nonscalar semisimple, then
\[
 \Cent(q)\cap\sltwo(\F_p)=\langle Z_q\rangle,
\]
and \(p\nmid r_0\).
\end{lemma}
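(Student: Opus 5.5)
The lemma has two claims, and I will handle them separately.

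\emph{The centralizer claim.} This is the earlier centralizer-axis lemma specialized to the present case. Since \(q\) is nonscalar, \(\Cent(q)\cap\sltwo(\F_p)=\langle A_q\rangle\), and by definition \(Z_q=A_q\). It remains only to check that \(Z_q\neq0\), so that the span really is a line. This follows from \(\tr(Z_q^2)=\Delta(q)/2\neq0\), which comes from \eqref{eq:A-square} and the hypothesis \(\Delta(q)\ne0\) (recall \(p\) is odd, so \(2\) is invertible). Nonscalarity alone already gives \(A_q\ne0\), so this is a consistency check.

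\emph{The claim \(p\nmid r_0\).} I will use the fact that \(\Delta(q)\ne0\) means the characteristic polynomial \(t^2-\tr(q)\,t+\det q\) has two distinct roots \(\lambda\ne\mu\) in \(\F_{p^2}\), both nonzero because \(q\) is invertible.

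First, diagonalize \(q\) over \(\F_{p^2}\). Then \(q^n=I\) if and only if \(\lambda^n=\mu^n=1\), so \(r_0=\operatorname{lcm}(\ord\lambda,\ord\mu)\). Both orders divide \(p^2-1\), since \(\lambda,\mu\in\F_{p^2}^\times\). Hence \(r_0\mid p^2-1\), which is prime to \(p\).

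As an alternative that avoids extension fields, suppose \(p\mid r_0\) and set \(u=q^{r_0/p}\). Then \(u^p=I\), so \((u-I)^p=u^p-I=0\) in characteristic \(p\), and \(u\) is unipotent. But \(u\) is a polynomial in \(q\), so it is semisimple: the minimal polynomial of \(q\) is separable, and a polynomial in a semisimple element stays semisimple. A matrix that is both semisimple and unipotent equals \(I\). This gives \(q^{r_0/p}=I\), contradicting the minimality of \(r_0\).

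\emph{Expected obstacle.} The only delicate point is justifying that semisimplicity (equivalently, distinct eigenvalues) survives passage to \(\F_{p^2}\) in the irreducible case. Working directly with eigenvalues in \(\F_{p^2}\) sidesteps this entirely, so the first argument is the one I would write.
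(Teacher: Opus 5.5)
Your proof is correct and takes essentially the paper's route: the centralizer claim is the earlier centralizer-axis lemma with $Z_q=A_q$, and $p\nmid r_0$ holds because the eigenvalues of $q$ lie in $\F_p^\times$ or $\F_{p^2}^\times$, and these groups have order prime to $p$. Your alternative semisimple-plus-unipotent argument is also valid, but it is not needed.
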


\begin{proof}
The centralizer statement was proved above.  In the split case the
eigenvalues of \(q\) lie in \(\F_p^\times\); in the irreducible case they
lie in \(\F_{p^2}^\times\).  Both multiplicative groups have order prime to
\(p\), so the order \(r_0\) of a semisimple element is prime to \(p\).
\end{proof}

\begin{theorem}[Semisimple centralizer-projection formula]
Let \(q\) be nonscalar semisimple and \(r_0=\ord(q)\).  Define
\[
 P_q(X)=\frac{\tr(Z_qX)}{\tr(Z_q^2)}Z_q.
\]
Then
\[
 \boxed{N_q=r_0P_q.}
\]
Consequently
\[
 \im N_q=\Cent(q)\cap\sltwo(\F_p)=\langle Z_q\rangle,
 \qquad
 \ker N_q=Z_q^\perp,
 \qquad
 \rank N_q=1.
\]
\end{theorem}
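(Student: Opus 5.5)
We need to show that $N_q=r_0P_q$ for nonscalar semisimple $q$. The plan is to split $\sltwo(\F_p)$ orthogonally, with respect to the trace form, as
\[
\sltwo(\F_p)=\langle Z_q\rangle\oplus Z_q^\perp,
\]
and to compute $N_q$ on each summand. The splitting is legitimate because $\tr(Z_q^2)=\Delta(q)/2\ne0$, so the trace form restricted to $\langle Z_q\rangle$ is nondegenerate. Also, $P_q$ is exactly the projection onto $\langle Z_q\rangle$ along $Z_q^\perp$. So it suffices to prove two facts: $N_q(Z_q)=r_0Z_q$, and $N_q$ vanishes on $Z_q^\perp$.

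\textbf{The axis.} This part is immediate. $Z_q$ commutes with $q$, so each conjugate satisfies $q^{-j}Z_qq^j=Z_q$. Summing over $r_0$ terms gives $N_q(Z_q)=r_0Z_q$.

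\textbf{The complement.} We first check that $Z_q^\perp$ is $\Ad_q$-stable. The trace form is $\Ad$-invariant, and $\Ad_q$ fixes $Z_q$, so $\tr(Z_q\,q^{-1}Xq)=\tr(qZ_qq^{-1}X)=\tr(Z_qX)$. Hence $N_q$ maps $Z_q^\perp$ into itself.

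Next, by the image-axis principle together with the preceding lemma and corollary ($N_q=s\mathcal N_q$), the image of $N_q$ lies in $\langle Z_q\rangle$. Therefore
\[
N_q(Z_q^\perp)\subseteq\langle Z_q\rangle\cap Z_q^\perp=0,
\]
where the intersection is zero again because $\tr(Z_q^2)\ne0$.

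Combining the two facts, $N_q(X)=r_0\,\tr(Z_qX)\tr(Z_q^2)^{-1}Z_q=r_0P_q(X)$ for every $X$.

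\textbf{Consequences.} By the semisimple fixed-line lemma, $p\nmid r_0$, so $r_0$ is a unit in $\F_p$. Hence $N_q$ is a nonzero multiple of the rank-one projection $P_q$. This gives $\im N_q=\langle Z_q\rangle=\Cent(q)\cap\sltwo(\F_p)$, $\ker N_q=Z_q^\perp$, and $\rank N_q=1$.

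\textbf{Main obstacle.} The only delicate point is the vanishing on $Z_q^\perp$. I would obtain it from the image-axis principle plus invariance of the complement, as above, rather than by computing eigenvalues of $\Ad_q$ over $\F_{p^2}$. A direct diagonalization would also work: on $Z_q^\perp$, $\Ad_q$ acts by eigenvalues $\mu^{\pm1}$ with $\mu=\lambda_1/\lambda_2\ne1$ and $\mu^{r_0}=1$, so the geometric sums $\sum_j\mu^{\pm j}$ vanish. That route is a useful cross-check but is not needed.
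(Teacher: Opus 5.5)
Your proof is correct and follows essentially the paper's route. Both arguments use two facts: the image of $N_q$ lies in the fixed line $\langle Z_q\rangle$, and the $\Ad$-invariant trace form, nondegenerate on that line because $\tr(Z_q^2)\ne0$, picks out $Z_q^\perp$ as the complement. The difference is only in organization. The paper invokes $p\nmid r_0$ at the start to form the averaging idempotent $r_0^{-1}N_q$, then simply asserts that it is the trace-orthogonal projector. You instead check directly that $N_q$ kills the $\Ad_q$-stable complement $Z_q^\perp$, which spells out the step the paper leaves implicit and postpones the use of $p\nmid r_0$ until the rank and kernel consequences.
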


\begin{proof}
Because \(q^{r_0}=I\), cyclic reindexing gives
\[
 \Ad_qN_q=N_q=N_q\Ad_q.
\]
Thus \(\im N_q\) lies in the fixed space of \(\Ad_q\), namely
\(\langle Z_q\rangle\).  Since \(p\nmid r_0\), the normalized average
\(E_q=r_0^{-1}N_q\) is the projector onto that fixed space.  The trace
pairing \(\langle X,Y\rangle=\tr(XY)\) is nondegenerate on
\(\sltwo(\F_p)\) for odd \(p\), is preserved by conjugation, and
\(\tr(Z_q^2)\ne0\).  Hence the normalized cyclic average is precisely the
trace-orthogonal projector onto \(\langle Z_q\rangle\), which is \(P_q\).
Therefore \(N_q=r_0P_q\).
\end{proof}

\subsubsection{Modular cyclic norm viewpoint}

Let \(C_p=\langle g\rangle\) act on a vector space over characteristic
\(p\).  Its cyclic norm element is
\[
 \mathbf N=1+g+\cdots+g^{p-1}.
\]
Since \(g^p-1=(g-1)^p\), one has the classical group-algebra identity
\begin{equation}
 \boxed{\mathbf N=(g-1)^{p-1}.}
 \label{eq:cyclic-norm-power}
\end{equation}
The classification of indecomposable modules for cyclic \(p\)-groups in
characteristic \(p\) is classical \cite{Alperin1986,Benson1998}.  In the
Jordan adjoint sector below, \((g-1)^3=0\), so \eqref{eq:cyclic-norm-power}
already forces the \(p\)-step norm to vanish for \(p>3\).  The explicit
calculation identifies what survives at \(p=3\).

\subsubsection{Nonscalar Jordan case}

Assume \(\Delta(q)=0\).  Put
\[
 \lambda=\frac{\tr(q)}2.
\]
Since \(q\) is nonscalar and invertible,
\[
 q=\lambda(I+U),\qquad U\ne0,\quad U^2=0,
\]
and if \(m=\ord(\lambda)\), then \(\ord(q)=mp\).
Scalar multiplication does not affect conjugation.  For \(j\in\F_p\),
\[
 (I+U)^j=I+jU,
 \qquad
 (I+U)^{-j}=I-jU,
\]
so
\begin{equation}
 \Ad_{(I+U)^j}(X)=X+j[U,X]-j^2UXU.
 \label{eq:jordan-polynomial}
\end{equation}

\begin{lemma}[Finite-field quadratic moment]
For odd prime \(p\),
\[
 \sum_{j\in\F_p}1=0,
 \qquad
 \sum_{j\in\F_p}j=0,
\]
and
\[
 \sum_{j\in\F_p}j^2=
 \begin{cases}
 -1,&p=3,\\
 0,&p>3.
 \end{cases}
\]
\end{lemma}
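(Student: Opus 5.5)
The plan is to compute the three power sums directly in $\F_p$, using only elementary facts about the integers $0,1,\dots,p-1$ read modulo $p$.

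First, $\sum_{j\in\F_p}1$ counts the $p$ elements of $\F_p$, so it equals $p\cdot 1=0$ in $\F_p$.

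Second, for the linear moment I will work over $\Z$ with the representatives $0,\dots,p-1$:
\[
 \sum_{j=0}^{p-1} j=\frac{p(p-1)}{2}.
\]
Since $p$ is odd, $(p-1)/2$ is an integer, so this sum is divisible by $p$ and vanishes in $\F_p$. An alternative is the involution $j\mapsto -j$, which pairs off the nonzero terms.

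Third, for the quadratic moment I will use the integer identity
\[
 \sum_{j=0}^{p-1}j^2=\frac{(p-1)p(2p-1)}{6}.
\]
When $p>3$, $p$ is coprime to $6$. The integer $(p-1)(2p-1)/6$ is then an integer, because $6$ divides $(p-1)p(2p-1)$ and is coprime to $p$. Hence the sum is $p$ times an integer and vanishes modulo $p$. When $p=3$, I will compute directly: $0+1+4=5\equiv -1\pmod 3$.

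As a cross-check for $p>3$, I can use a primitive root argument. Pick $a\in\F_p^\times$ with $a^2\ne1$; such an $a$ exists because $p>3$. Reindexing $j\mapsto aj$ gives $S=a^2S$ for $S=\sum j^2$, which forces $S=0$. This fails exactly when every $a$ satisfies $a^2=1$, that is, when $p=3$, which explains the exceptional case.

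There is no real obstacle. The only point needing care is the divisibility by $6$, namely that $\gcd(6,p)=1$ is what allows $p$ to be factored out when $p>3$. The scaling argument avoids that issue altogether.
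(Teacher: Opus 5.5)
Your argument is correct, but its main line differs from the paper's.

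The paper calls the first two sums standard and gets the quadratic moment from the general power-sum criterion over $\F_p^\times$: $\sum_{j\in\F_p^\times}j^k$ equals $-1$ if $p-1\mid k$ and $0$ otherwise. For $k=2$ and odd $p$ the sum is therefore nonzero exactly when $p-1\mid 2$, i.e.\ $p=3$, and the term $0^2$ contributes nothing.

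You instead use the integer closed forms $p(p-1)/2$ and $(p-1)p(2p-1)/6$. You extract the factor $p$ via $\gcd(6,p)=1$ when $p>3$, and you evaluate $0+1+4=5\equiv-1\pmod 3$ directly.

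\textbf{What each route buys.} Your route is completely elementary and self-contained, but it is specific to $k=2$. For higher moments you would need Faulhaber's polynomials and would have to control the primes in their denominators. The paper's criterion handles every exponent uniformly and isolates the threshold $p-1\mid k$. That is the same mechanism behind the later remark that characteristic $3$ is where the quadratic degree reaches $p-1$.

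Your scaling cross-check, $S=a^2S$ with $a^2\ne1$, is the standard proof of that criterion specialized to $k=2$. Such an $a$ exists for $p>3$ because $x^2-1$ has at most two roots in $\F_p$; no primitive root is needed. So the cross-check essentially reproduces the paper's route for the vanishing case. The value $-1$ at $p=3$ then comes from the other branch of the criterion: every nonzero $j$ has $j^{p-1}=1$, so the sum is $p-1=-1$.
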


\begin{proof}
The first two identities are standard.  Over \(\F_p^\times\), the sum of
\(k\)-th powers vanishes unless \(p-1\mid k\), in which case it equals
\(-1\).  For \(k=2\) and odd \(p\), this occurs exactly at \(p=3\).
\end{proof}

\begin{lemma}[Rank of the exceptional channel]
For nonzero nilpotent \(U\in M_2(\F_p)\) with \(U^2=0\), the map
\[
 J_U:\sltwo(\F_p)\to\sltwo(\F_p),\qquad X\mapsto UXU,
\]
has rank one.
\end{lemma}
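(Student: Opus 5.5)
We reduce to a normal form and then compute directly. Since \(U^2=0\ne U\), the matrix \(U\) has rank one, so \(U=vw^{T}\) for nonzero column vectors \(v,w\) with \(w^{T}v=\tr U=0\).

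The key identity is
\[
 UXU=v\,(w^{T}Xv)\,w^{T}=(w^{T}Xv)\,U=\tr(UX)\,U ,
\]
because \(\tr(UX)=\tr(vw^{T}X)=w^{T}Xv\). Hence \(J_U(X)=\tr(UX)\,U\). This shows two things at once:
\[
 \im J_U\subseteq\langle U\rangle,\qquad U\in\sltwo(\F_p),
\]
so \(J_U\) does map \(\sltwo(\F_p)\) into itself.

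It remains to see that \(J_U\neq0\) on \(\sltwo(\F_p)\). This amounts to finding a traceless \(X\) with \(\tr(UX)\ne0\). We reduce to a normal form by conjugation. Any nonzero square-zero \(2\times2\) matrix is conjugate over \(\F_p\) to \(E_{12}\). Conjugation preserves \(\sltwo(\F_p)\) and satisfies \(J_{gUg^{-1}}=\Ad_g\circ J_U\circ\Ad_g^{-1}\), so the rank is unchanged. We may therefore take \(U=E_{12}\) and \(X=E_{21}\in\sltwo(\F_p)\). Then \(UXU=E_{12}E_{21}E_{12}=E_{12}\neq0\).

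Alternatively, the trace pairing is nondegenerate on \(\sltwo(\F_p)\) for odd \(p\), as noted in the semisimple theorem. Since \(U\) is a nonzero element of \(\sltwo(\F_p)\), some \(X\in\sltwo(\F_p)\) has \(\tr(UX)\ne0\).

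Combining the two parts, the image of \(J_U\) is exactly \(\langle U\rangle\), and the kernel is \(U^\perp\) under the trace pairing. Thus \(\rank J_U=1\).

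There is no serious obstacle in this argument. The only point needing care is that the rank is taken on the traceless subspace rather than on all of \(M_2(\F_p)\). So it must be checked that the nonvanishing witness \(X\) is traceless, which holds for both \(E_{21}\) and the vector supplied by nondegeneracy.
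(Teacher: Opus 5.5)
Your proof is correct and is essentially the paper's argument. The paper conjugates $U$ to $E_{12}$ and computes $E_{12}XE_{12}=cE_{12}$ for traceless $X=\left(\begin{smallmatrix}a&b\\c&-a\end{smallmatrix}\right)$, which is your identity $UXU=\tr(UX)\,U$ written in a Jordan basis; your witness $X=E_{21}$ is its case $c\neq0$, and your factorization $U=vw^{T}$ simply derives the same identity basis-free, giving $\im J_U=\langle U\rangle$ and $\ker J_U=U^{\perp}$ directly.
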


\begin{proof}
After conjugation and nonzero scalar rescaling, \(U\) may be replaced by
\(E_{12}\).  For
\[
 X=\begin{pmatrix}a&b\\c&-a\end{pmatrix}
\]
one obtains \(E_{12}XE_{12}=cE_{12}\).  Hence the image is exactly one
dimensional.
\end{proof}

\begin{theorem}[Jordan full-order transfer dichotomy]
Let \(q=\lambda(I+U)\) be nonscalar Jordan, with \(m=\ord(\lambda)\).
Then
\[
 \boxed{
 N_q(X)=
 \begin{cases}
 m\,UXU,&p=3,\\
 0,&p>3.
 \end{cases}}
\]
Consequently
\[
 \rank N_q=
 \begin{cases}
 1,&p=3,\\
 0,&p>3.
 \end{cases}
\]
\end{theorem}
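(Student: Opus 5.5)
We compute $N_q=N_q^{(r_0)}$ directly with $r_0=mp$, reducing the sum to a polynomial moment computation over $\F_p$. Since scalars act trivially under conjugation, $q^{-j}Xq^{j}=(I+U)^{-j}X(I+U)^{j}$ for every $j$. The exponent enters only through its residue modulo $p$, because $(I+U)^p=I+pU=I$ over $\F_p$. The paper's convention is $\Ad_g(X)=gXg^{-1}$, so the summand is $\Ad_{(I+U)^{-j}}(X)$. By \eqref{eq:jordan-polynomial} applied at exponent $-j$, it equals $X-j[U,X]-j^2UXU$; the sign of the linear term is irrelevant below. We verify this once directly by expanding $(I-jU)X(I+jU)=X+j(XU-UX)-j^2UXU$.

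Next we reorganize the sum. As $j$ runs over $0,\dots,mp-1$, each residue class of $\F_p$ is hit exactly $m$ times. Hence
\[
 N_q(X)=m\Bigl(\sum_{j\in\F_p}1\Bigr)X-m\Bigl(\sum_{j\in\F_p}j\Bigr)[U,X]-m\Bigl(\sum_{j\in\F_p}j^2\Bigr)UXU.
\]
We then invoke the finite-field quadratic moment lemma. The first two moments vanish. The third is $-1$ for $p=3$ and $0$ for $p>3$. This gives $N_q(X)=m\,UXU$ when $p=3$ and $N_q=0$ when $p>3$. The $p>3$ case is also consistent with the cyclic norm identity \eqref{eq:cyclic-norm-power}, since $\Ad_{I+U}-1$ is nilpotent of order at most $3$ on $\sltwo$; the direct moment computation covers both cases uniformly, however.

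For the rank statement, note that $m=\ord(\lambda)$ divides $p-1$, so $m$ is a nonzero element of $\F_p$. When $p=3$, the rank of $N_q$ therefore equals the rank of $J_U$, which is $1$ by the rank lemma for the exceptional channel; the image is spanned by $U$, which is proportional to $A_q=\lambda U$, in agreement with the image-axis principle. For $p>3$ the rank is $0$.

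No step is a genuine obstacle. The only points needing care are the residue bookkeeping (that $(I+U)^j$ depends only on $j \bmod p$, and that $\ord(q)=mp$ is as stated) and the sign conventions in the conjugation expansion. Neither affects the outcome, since only the even moment survives.
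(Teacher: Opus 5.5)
Your proof is correct and follows essentially the same route as the paper. Both arguments strip off the scalar $\lambda$, expand the unipotent conjugation as a quadratic in $j$, and view the full-order sum as $m$ copies of a sum over $\F_p$. Both then apply the moment lemma and obtain the rank from $m\mid p-1$ together with the rank-one lemma for $X\mapsto UXU$. Your explicit check of the sign at exponent $-j$ is slightly more careful than the paper's direct appeal to the $\Ad_{(I+U)^j}$ formula, but it does not change the argument.
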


\begin{proof}
Summing \eqref{eq:jordan-polynomial} over one unipotent \(p\)-cycle and
using the moment lemma gives \(X\mapsto UXU\) at \(p=3\) and the zero map
for \(p>3\).  Since \(\ord(q)=mp\), the full matrix-order transfer consists
of \(m\) identical unipotent cycles.  Hence it is \(m(X\mapsto UXU)\) for
\(p=3\) and zero for \(p>3\).  Since \(m\mid p-1\), its coefficient is
nonzero modulo \(p\); the rank-one statement follows from the preceding
lemma.
\end{proof}

\begin{remark}[Why characteristic \(3\) is exceptional]
The Jordan conjugation orbit is quadratic in the cyclic parameter.  Its
constant and linear moments vanish for every odd prime.  The quadratic
moment vanishes for \(p>3\), but at \(p=3\) its degree equals \(p-1\), so it
survives.  Thus characteristic \(3\) is the critical odd characteristic
for this \(2\times2\) adjoint problem.
\end{remark}

\subsubsection{Unified global transfer theorem}

The scalar case is also transparent: if \(q=\lambda I\) and
\(r_0=\ord(\lambda)\), then \(\Ad_q=I\) and
\[
 N_q=r_0I.
\]
Since \(r_0\mid p-1\), this has rank three.

\begin{theorem}[Global Adjoint Transfer Classification]
\label{thm:global-transfer}
Let \(p\) be an odd prime and \(q\in GL_2(\F_p)\).  Put
\[
 N_q=\sum_{j=0}^{\ord(q)-1}\Ad_{q^{-j}}
 \quad\text{on }\sltwo(\F_p).
\]
Then exactly one of the following occurs:
\[
 \boxed{
 N_q=
 \begin{cases}
 r_0I,&q=\lambda I,\quad r_0=\ord(\lambda),\\[1mm]
 r_0P_q,&q\text{ nonscalar semisimple},\\[1mm]
 m(X\mapsto UXU),&q=\lambda(I+U)\text{ Jordan},\ p=3,\\[1mm]
 0,&q=\lambda(I+U)\text{ Jordan},\ p>3.
 \end{cases}}
\]
Accordingly,
\[
 \boxed{
 \rank N_q=
 \begin{cases}
 3,&q\text{ scalar},\\
 1,&q\text{ nonscalar semisimple},\\
 1,&q\text{ nonscalar Jordan},\ p=3,\\
 0,&q\text{ nonscalar Jordan},\ p>3.
 \end{cases}}
\]
\end{theorem}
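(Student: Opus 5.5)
The plan is to assemble the theorem from the case analyses already carried out, after first checking that the four cases exhaust \(GL_2(\F_p)\) and are mutually exclusive. For \(q\in GL_2(\F_p)\), either \(q\) is scalar, or it is nonscalar with \(\Delta(q)\ne0\), or it is nonscalar with \(\Delta(q)=0\). In the last case, Cayley--Hamilton in the form \eqref{eq:A-square} gives \(A_q^2=0\). Then \(q=\lambda(I+U)\) with \(\lambda=\tr(q)/2\ne0\) (since \(\det q=\lambda^2\ne0\)) and \(U=\lambda^{-1}A_q\ne0\), \(U^2=0\). Nonscalar semisimple is understood here as \(\Delta(q)\ne0\); this covers both the split and the irreducible case, as in the semisimple fixed-line lemma. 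The trichotomy scalar / \(\Delta\ne0\) / \(\Delta=0\) nonscalar is exclusive. The Jordan case is then split further by \(p=3\) versus \(p>3\), which gives exactly one of the four alternatives.

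Next I would read off each operator formula from the result already established for it.
\begin{itemize}
\item \emph{Scalar case.} Here \(\Ad_{q^{-j}}=1\) for every \(j\), so \(N_q=r_0I\) with \(r_0=\ord(\lambda)\).
\item \emph{Nonscalar semisimple case.} This is the semisimple centralizer-projection formula \(N_q=r_0P_q\).
\item \emph{Jordan cases.} These are the Jordan full-order transfer dichotomy, using \(\ord(q)=mp\) with \(m=\ord(\lambda)\). I would note briefly why this order formula holds: \(q^n=\lambda^n(I+nU)\), which equals \(I\) iff \(m\mid n\) and \(p\mid n\), and \(\gcd(m,p)=1\).
\end{itemize}
The only point to double-check is that the paper's definition \(N_q=\sum_{j=0}^{\ord(q)-1}\Ad_{q^{-j}}\) matches \(N_q^{(r_0)}\) used in those results. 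It does, since \(\Ad_{q^{-j}}(X)=q^{-j}Xq^j\).

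For the ranks:
\begin{itemize}
\item \emph{Scalar case.} \(r_0\mid p-1\), so \(r_0\) is a unit in \(\F_p\). Hence \(r_0I\) is invertible on the three-dimensional space \(\sltwo(\F_p)\), and the rank is \(3\).
\item \emph{Semisimple case.} \(p\nmid r_0\) by the fixed-line lemma, and \(P_q\) is a nonzero rank-one projector because \(\tr(Z_q^2)=\Delta(q)/2\ne0\). So the rank is \(1\).
\item \emph{Jordan case, \(p=3\).} \(m\mid p-1\) is a unit, and the rank-of-exceptional-channel lemma gives rank \(1\) for \(X\mapsto UXU\).
\item \emph{Jordan case, \(p>3\).} The zero operator has rank \(0\).
\end{itemize}

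There is no real obstacle, since everything is a collation of earlier results. The step needing most care is the exhaustiveness argument, specifically that \(\Delta(q)=0\) with \(q\) nonscalar forces the form \(\lambda(I+U)\) with \(\lambda\in\F_p^\times\). The division by \(2\) uses that \(p\) is odd, and \(A_q\ne0\) precisely because \(q\) is nonscalar.
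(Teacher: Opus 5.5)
Your proposal is correct and follows the paper's proof: the theorem is assembled from the scalar computation (conjugation by a scalar is trivial), the semisimple centralizer-projection formula and the Jordan full-order transfer dichotomy. The cases scalar, nonscalar with \(\Delta(q)\ne0\), and nonscalar with \(\Delta(q)=0\) exhaust \(GL_2(\F_p)\) and are mutually exclusive. Your extra details — \(\det q=\lambda^2\) forcing \(\lambda\ne0\), the derivation \(\ord(q)=mp\), and the checks that \(r_0\) and \(m\) are units for the rank count — only make explicit what the paper leaves to the earlier lemmas.
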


\begin{proof}
The scalar formula follows from trivial conjugation.  The nonscalar
semisimple formula is the centralizer-projection theorem.  The repeated-root
case is the Jordan full-order transfer dichotomy.  These cases are mutually
exclusive and exhaustive for \(2\times2\) matrices over a field.
\end{proof}

\begin{corollary}[Global kernel--image dimensions]
Under Theorem~\ref{thm:global-transfer},
\[
 (\dim\ker N_q,\dim\im N_q)=
 \begin{cases}
 (0,3),&q\text{ scalar},\\
 (2,1),&q\text{ nonscalar semisimple},\\
 (2,1),&q\text{ Jordan},\ p=3,\\
 (3,0),&q\text{ Jordan},\ p>3.
 \end{cases}
\]
\end{corollary}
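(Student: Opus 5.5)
The corollary follows from Theorem~\ref{thm:global-transfer} and rank--nullity on the three-dimensional space \(\sltwo(\F_p)\). In each case I will read off \(\dim\im N_q=\rank N_q\) from the rank table, and then set \(\dim\ker N_q=3-\rank N_q\). The only work is to confirm that each rank entry holds as stated. I will check the entries case by case rather than simply quoting the boxed table.

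\emph{Scalar case.} Here \(N_q=r_0 I\). The coefficient \(r_0\) divides \(p-1\), so it is a unit in \(\F_p\), and \(N_q\) is invertible. This gives \((0,3)\).

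\emph{Nonscalar semisimple case.} Here \(N_q=r_0P_q\), and \(r_0\in\F_p^\times\) by the semisimple fixed-line lemma. The map \(P_q\) has image \(\langle Z_q\rangle\), since \(P_q(Z_q)=Z_q\ne0\). Its kernel is the trace-orthogonal complement \(Z_q^\perp\). This complement is two-dimensional because the trace pairing is nondegenerate on \(\sltwo(\F_p)\) for odd \(p\). This gives \((2,1)\).

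\emph{Jordan case, \(p=3\).} Here \(N_q=m\,J_U\) with \(m\mid p-1\), so \(m\) is a unit. The rank-one lemma for \(J_U\) then gives image dimension \(1\), and rank--nullity gives kernel dimension \(2\). Concretely, after conjugating to \(U=E_{12}\), the kernel is \(\{c=0\}\).

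\emph{Jordan case, \(p>3\).} Here \(N_q=0\), so the pair is \((3,0)\).

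Every step is immediate. The only point needing care is that each scalar coefficient (\(r_0\) or \(m\)) is nonzero modulo \(p\). Without this, the rank could collapse. It follows from the no-period-collapse corollary and the fact that \(\ord(\lambda)\mid p-1\).
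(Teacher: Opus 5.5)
Your proposal is correct and follows the paper's route: the paper just applies rank--nullity on the three-dimensional space $\sltwo(\F_p)$ to the four cases of Theorem~\ref{thm:global-transfer}, and your case-by-case rechecking of each rank from the explicit operator formulas is harmless extra verification. One small attribution point: in the semisimple case, $r_0\not\equiv 0 \pmod p$ comes from the semisimple fixed-line lemma, as you correctly cite there, and not from the no-period-collapse corollary, which only shows that $r_0/\rho$ is prime to $p$.
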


\begin{proof}
The ambient space $\sltwo(\F_p)$ has dimension three.  Applying
rank--nullity to the four cases of Theorem~\ref{thm:global-transfer}
gives the stated kernel and image dimensions.
\end{proof}

\subsubsection{Response--transfer corollary}

Let \(B_1,\dots,B_s\in M_2(\F_p)\) be trace probes whose restrictions
span \(\sltwo(\F_p)^*\), and define
\[
 R_q(X)=\bigl(\tr(B_1N_q(X)),\dots,\tr(B_sN_q(X))\bigr).
\]

\begin{corollary}[Response--transfer duality]
For every \(q\in GL_2(\F_p)\),
\[
 \boxed{\ker R_q=\ker N_q,\qquad \rank R_q=\rank N_q.}
\]
Thus the same \(3/1/1/0\) classification is the exact first-order
observational reconstruction classification for a nondegenerate trace-probe
family.
\end{corollary}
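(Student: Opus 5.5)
The plan is to factor $R_q$ through $N_q$ by an injective linear map on $\sltwo(\F_p)$. Define
\[
 \beta:\sltwo(\F_p)\to\F_p^s,\qquad \beta(Y)=\bigl(\tr(B_1Y),\dots,\tr(B_sY)\bigr),
\]
so that $R_q=\beta\circ N_q$ as linear maps on $\sltwo(\F_p)$. Here $N_q$ takes values in $\sltwo(\F_p)$, because each $\Ad_{q^{-j}}$ preserves tracelessness. Once $\beta$ is shown to be injective, both boxed identities follow. First, $R_q(X)=0$ exactly when $N_q(X)=0$, so $\ker R_q=\ker N_q$. Second, $\beta$ restricts to an isomorphism from $\im N_q$ onto $\im R_q$, so $\rank R_q=\rank N_q$.

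The only step with content is the injectivity of $\beta$, and it comes directly from the spanning hypothesis. Suppose $\beta(Y)=0$ for some $Y\in\sltwo(\F_p)$. Then every functional $\tr(B_i\,\cdot\,)|_{\sltwo}$ vanishes at $Y$. By hypothesis these restrictions span $\sltwo(\F_p)^*$, so every linear functional on $\sltwo(\F_p)$ vanishes at $Y$. Since $\sltwo(\F_p)$ is a finite-dimensional vector space, its dual separates points, and hence $Y=0$. This step uses no nondegeneracy of the trace form: the spanning assumption is imposed on the restricted functionals themselves.

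The final sentence of the corollary then follows by combining the rank equality with Theorem~\ref{thm:global-transfer}. This gives $\rank R_q\in\{3,1,1,0\}$ in the scalar, nonscalar semisimple, Jordan with $p=3$, and Jordan with $p>3$ cases respectively. The kernel dimensions are those listed in the kernel--image corollary. By the affine first variation proposition, $R_q$ records the observed first-order change of the defect, so this is the stated observational classification. I do not expect any step to be a real obstacle.
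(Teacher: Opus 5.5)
Your proposal is correct and follows essentially the same route as the paper: injectivity of the probe map on $\sltwo(\F_p)$ gives $\ker R_q=\ker N_q$, and the rank equality follows (the paper uses rank--nullity, while you use the induced isomorphism $\im N_q\cong\im R_q$). Your observation that the spanning hypothesis alone yields injectivity is also right, so the paper's appeal to nondegeneracy of the trace pairing is not actually needed.
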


\begin{proof}
The trace pairing on \(\sltwo(\F_p)\) is nondegenerate for odd \(p\), and
the assumed probes span its dual.  Hence the measurement map is injective on
\(\sltwo(\F_p)\).  Therefore all responses vanish exactly when
\(N_q(X)=0\), giving equality of kernels and, by rank--nullity, equality of
ranks.
\end{proof}

\begin{remark}[Dormancy versus measurement blindness]
If \(N_q(X)\ne0\) but a restricted probe family annihilates it, the failure
is measurement blindness.  If \(N_q(X)=0\), the first-order signal has
already vanished before measurement.  The nonscalar Jordan sector for odd
\(p>3\) is therefore an exact transfer-dormant sector.
\end{remark}

\section{Higher-order Jordan perturbations}
\label{sec:higher-jordan}

The first-order classification above leaves a precise question in the
nonscalar Jordan sector for \(p>3\): does the vanishing transfer represent
absolute loss of perturbative information, or only delayed visibility?
The following theorem gives an exact answer.

Let
\[
 q=\lambda(I+U),\qquad U^2=0,\quad U\ne0,
\]
with \(\lambda\in\F_p^\times\), and let
\[
 A(\varepsilon)=q(I+\varepsilon X),\qquad
 X\in\sltwo(\F_p).
\]
Write
\[
 m=\ord_{\F_p^\times}(\lambda),\qquad
 d=\frac{p-1}{2}.
\]
Since \(I+U\) has order \(p\) and \(m\mid p-1\), the matrix \(q\) has
order \(mp\).

\begin{lemma}[Traceless Cayley--Hamilton decomposition]
\label{lem:ch-discriminant}
Let \(R\) be a commutative ring in which \(2\) is invertible, and let
\(A\in M_2(R)\).  Put
\[
 t=\tr(A),\qquad Z=A-\frac t2I,\qquad
 \Delta(A)=t^2-4\det(A).
\]
Then
\[
 Z^2=\frac{\Delta(A)}4I.
\]
If moreover \(R\) is an \(\F_p\)-algebra with \(p\) odd, then
\[
 \boxed{
 A^p=\left(\frac t2\right)^pI+
 \left(\frac{\Delta(A)}4\right)^{(p-1)/2}Z.}
\]
\end{lemma}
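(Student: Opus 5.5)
The proof splits into two parts. First the Cayley--Hamilton identity over an arbitrary commutative ring with \(2\) invertible, then the Frobenius-type binomial expansion in characteristic \(p\).

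For the first claim, write \(A=\frac t2 I+Z\) and substitute into the Cayley--Hamilton relation \(A^2-tA+\det(A)I=0\), which holds over any commutative ring. Expanding, the \(Z\)-linear terms cancel, since \(2\cdot\frac t2 Z-tZ=0\). What remains is
\[
Z^2+\frac{t^2}{4}I-\frac{t^2}{2}I+\det(A)I=0,
\]
that is, \(Z^2=\frac{t^2-4\det A}{4}I=\frac{\Delta(A)}4I\). This is the same computation as \eqref{eq:A-square}, now carried out over \(R\) rather than \(\F_p\).

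For the second claim, the scalar matrix \(\frac t2 I\) commutes with \(Z\), so the binomial theorem applies:
\[
A^p=\sum_{k=0}^p\binom pk\Bigl(\frac t2\Bigr)^{p-k}Z^k.
\]
Since \(R\) is an \(\F_p\)-algebra, \(p\cdot 1_R=0\). Hence \(\binom pk=0\) in \(R\) for \(0<k<p\), and only the terms \(k=0\) and \(k=p\) survive, giving \(A^p=(\frac t2)^pI+Z^p\). Because \(p\) is odd, \(Z^p=(Z^2)^{(p-1)/2}Z=(\frac{\Delta(A)}4)^{(p-1)/2}Z\), using the first part. This yields the boxed formula.

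No step is a real obstacle. The only points needing care are that Cayley--Hamilton holds for \(2\times2\) matrices over any commutative ring (a direct entrywise check) and that the vanishing of the middle binomial coefficients uses \(p\cdot1_R=0\) rather than any field property. Here \(2\) is a unit since \(p\) is odd.
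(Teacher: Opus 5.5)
Your proof is correct and follows essentially the same route as the paper: Cayley--Hamilton for the first identity, then the binomial expansion of $(\frac t2 I+Z)^p$ with vanishing middle coefficients and $Z^p=(Z^2)^{(p-1)/2}Z$. The only cosmetic difference is that the paper applies Cayley--Hamilton directly to the traceless $Z$ (giving $Z^2=-\det(Z)I$ with $-\det Z=\Delta(A)/4$), whereas you apply it to $A$ and substitute $A=\frac t2 I+Z$.
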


\begin{proof}
The matrix \(Z\) is traceless, so Cayley--Hamilton gives
\(Z^2=-\det(Z)I\).  Directly,
\(-\det(Z)=\Delta(A)/4\).  For the second assertion, the scalar matrix
\((t/2)I\) commutes with \(Z\), and the intermediate binomial coefficients
vanish in characteristic \(p\).  Hence
\[
 A^p=\left(\frac t2I+Z\right)^p
 =\left(\frac t2\right)^pI+Z^p.
\]
Writing \(p=2d+1\) and using \(Z^2=(\Delta/4)I\) gives
\(Z^p=(\Delta/4)^dZ\).
\end{proof}

\begin{lemma}[Jordan discriminant expansion]
\label{lem:jordan-discriminant-expansion}
For \(A(\varepsilon)=q(I+\varepsilon X)\) as above, put
\(c=\tr(UX)\).  Then the discriminant is exactly
\[
 \boxed{
 \Delta(A(\varepsilon))
 =\lambda^2\bigl(4c\varepsilon+(c^2-4\det X)\varepsilon^2\bigr).}
\]
In particular its linear term is \(4\lambda^2c\varepsilon\), and
\[
 A(0)-\frac{\tr A(0)}2I=\lambda U.
\]
\end{lemma}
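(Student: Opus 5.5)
We compute the trace and determinant of \(A(\varepsilon)\) exactly and then apply the definition \(\Delta=t^2-4\det\). Since \(\det\) is multiplicative and \(\det q=\lambda^2\det(I+U)=\lambda^2\) (because \(U\) is nilpotent, so \(\tr U=0=\det U\)), we have
\[
\det A(\varepsilon)=\lambda^2\det(I+\varepsilon X)=\lambda^2\bigl(1+\varepsilon\tr X+\varepsilon^2\det X\bigr)=\lambda^2(1+\varepsilon^2\det X),
\]
using \(\tr X=0\). This is the exact \(2\times2\) identity \(\det(I+Y)=1+\tr Y+\det Y\), so no truncation is involved.

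For the trace, expand \(A(\varepsilon)=\lambda(I+U)(I+\varepsilon X)=\lambda\bigl(I+U+\varepsilon X+\varepsilon UX\bigr)\). Taking traces and using \(\tr U=\tr X=0\) gives
\[
t=\tr A(\varepsilon)=\lambda(2+\varepsilon c),\qquad c=\tr(UX).
\]
Then
\[
t^2-4\det A(\varepsilon)=\lambda^2\bigl(4+4c\varepsilon+c^2\varepsilon^2\bigr)-4\lambda^2\bigl(1+\varepsilon^2\det X\bigr).
\]
The constant terms cancel, leaving \(\lambda^2\bigl(4c\varepsilon+(c^2-4\det X)\varepsilon^2\bigr)\). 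This is the boxed formula, and its linear term is \(4\lambda^2c\varepsilon\).

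Finally, at \(\varepsilon=0\) we have \(A(0)=q=\lambda I+\lambda U\) with \(\tr A(0)=2\lambda\). Hence \(A(0)-\tfrac{\tr A(0)}{2}I=\lambda U\).

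There is no real obstacle in this argument. The only point requiring care is to use the exact determinant identity rather than a first-order expansion, together with \(\tr U=\det U=0\), which holds because \(U\) is nonzero and nilpotent.
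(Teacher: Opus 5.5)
Your proof is correct and follows the paper's argument: you compute $\tr A(\varepsilon)=\lambda(2+\varepsilon c)$ and $\det A(\varepsilon)=\lambda^2(1+\varepsilon^2\det X)$ exactly, substitute into $\Delta=t^2-4\det$, and read off the traceless part of $A(0)=q$ as $\lambda U$. You also make explicit the exact $2\times2$ identity $\det(I+Y)=1+\tr Y+\det Y$ and the facts $\tr U=\det U=0$, which the paper uses without stating.
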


\begin{proof}
Since \(U\) and \(X\) are traceless,
\[
 \tr A(\varepsilon)
 =\lambda\bigl(2+\varepsilon\tr(UX)\bigr).
\]
Moreover \(\det(I+U)=1\), while for a traceless \(2\times2\) matrix \(X\),
\[
 \det(I+\varepsilon X)=1+\varepsilon^2\det X.
\]
Substitution into \(\Delta=(\tr A)^2-4\det A\) gives the displayed
exact quadratic formula.  At \(\varepsilon=0\), \(\tr(q)=2\lambda\), so the traceless part
of \(q\) is \(\lambda U\).
\end{proof}

\begin{theorem}[Jordan Perturbative Depth Theorem]
\label{thm:jordan-depth}
Let \(p\) be an odd prime,
\[
 q=\lambda(I+U)\in GL_2(\F_p),\qquad U^2=0,\quad U\ne0,
\]
and let \(X\in\sltwo(\F_p)\).  Put
\[
 m=\ord_{\F_p^\times}(\lambda),\qquad d=\frac{p-1}{2}.
\]
Then in \(M_2(\F_p[[\varepsilon]])\),
\[
 \boxed{
 [q(I+\varepsilon X)]^{mp}
 =I+m\bigl(\tr(UX)\bigr)^dU\,\varepsilon^d
 +O(\varepsilon^{d+1}).}
\]
Equivalently,
\[
 [\varepsilon^j][q(I+\varepsilon X)]^{mp}=0
 \qquad(1\le j<d),
\]
while
\[
 \boxed{
 [\varepsilon^d][q(I+\varepsilon X)]^{mp}
 =m\bigl(\tr(UX)\bigr)^dU.}
\]
\end{theorem}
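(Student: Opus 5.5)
The plan is to apply Lemma~\ref{lem:ch-discriminant} over the \(\F_p\)-algebra \(R=\F_p[[\varepsilon]]\) to \(A=A(\varepsilon)=q(I+\varepsilon X)\). First compute \(A^p\), then raise it to the \(m\)-th power.

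\textbf{Step 1: computing \(A^p\).} By the lemma,
\[
 A^p=\Bigl(\tfrac t2\Bigr)^pI+\Bigl(\tfrac{\Delta}4\Bigr)^dZ,
\]
where \(t=\tr A(\varepsilon)\) and \(Z=A-\tfrac t2I\). From the proof of Lemma~\ref{lem:jordan-discriminant-expansion} we have \(t/2=\lambda(1+\tfrac c2\varepsilon)\), with \(c=\tr(UX)\). Since Frobenius is a ring endomorphism of \(R\) and \(\lambda^p=\lambda\),
\[
 \Bigl(\tfrac t2\Bigr)^p=\lambda\Bigl(1+\bigl(\tfrac c2\bigr)^p\varepsilon^p\Bigr)=\lambda+O(\varepsilon^p).
\]
Lemma~\ref{lem:jordan-discriminant-expansion} gives \(\Delta/4=\lambda^2\varepsilon\bigl(c+\tfrac{c^2-4\det X}{4}\varepsilon\bigr)\), so
\[
 \Bigl(\tfrac\Delta4\Bigr)^d=\lambda^{2d}\varepsilon^d\bigl(c^d+O(\varepsilon)\bigr)=\lambda^{p-1}c^d\varepsilon^d+O(\varepsilon^{d+1})=c^d\varepsilon^d+O(\varepsilon^{d+1}).
\]
Finally \(Z=\lambda U+O(\varepsilon)\), by the last assertion of the same lemma. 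Combining these, and using \(d<p\) so that the scalar correction \(O(\varepsilon^p)\) is absorbed,
\[
 A^p=\lambda I+c^d\lambda U\varepsilon^d+O(\varepsilon^{d+1}).
\]

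\textbf{Step 2: raising to the \(m\)-th power.} Write \(A^p=\lambda\bigl(I+c^dU\varepsilon^d+O(\varepsilon^{d+1})\bigr)\). The scalar \(\lambda\) is central, and \(\lambda^m=1\). In the binomial expansion of the bracket, any product of two or more correction factors is \(O(\varepsilon^{2d})\), which is contained in \(O(\varepsilon^{d+1})\) because \(d\ge1\). Hence
\[
 A^{mp}=(A^p)^m=I+mc^dU\varepsilon^d+O(\varepsilon^{d+1}).
\]
This is the boxed formula. The coefficient statements for \(1\le j<d\) and for \(j=d\) are read off directly from it.

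\textbf{Main obstacle.} The delicate point is Step 1: the valuations in \(\varepsilon\) must be tracked carefully. The essential observation is that \(\Delta\) has \(\varepsilon\)-adic valuation at least \(1\), with leading coefficient \(4\lambda^2c\). The \(d\)-th power therefore produces exactly \(c^d\varepsilon^d\) at the leading order. Every other contribution is of higher order: the correction to \(Z\) is \(O(\varepsilon)\), so multiplying it by \((\Delta/4)^d\) gives \(O(\varepsilon^{d+1})\), and the Frobenius-twisted scalar correction is \(O(\varepsilon^p)\). The case \(c=0\) needs no separate argument. The same formula then yields vanishing through order \(d\), consistent with the deeper depth results stated later.
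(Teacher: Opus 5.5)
Your proposal is correct and follows the paper's proof essentially step for step. You apply the traceless Cayley--Hamilton/Frobenius identity over $\F_p[[\varepsilon]]$, use the exact discriminant expansion together with $\lambda^{p-1}=1$ to get $(\Delta/4)^d=\tr(UX)^d\varepsilon^d+O(\varepsilon^{d+1})$, absorb the scalar correction $O(\varepsilon^p)$ since $d<p$, and raise $A^p=\lambda\bigl(I+\tr(UX)^dU\varepsilon^d+O(\varepsilon^{d+1})\bigr)$ to the $m$-th power using $\lambda^m=1$ and $2d\ge d+1$.
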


\begin{proof}
Apply Lemma~\ref{lem:ch-discriminant} to \(A=A(\varepsilon)\):
\[
 A^p=\left(\frac{\tr A}{2}\right)^pI+
 \left(\frac{\Delta(A)}4\right)^d
 \left(A-\frac{\tr A}{2}I\right).
\]
The Frobenius map sends a power series \(\sum a_j\varepsilon^j\) to
\(\sum a_j^p\varepsilon^{jp}\).  Since \(d<p\), the scalar term has no
positive-degree contribution through degree \(d\).  By
Lemma~\ref{lem:jordan-discriminant-expansion},
\[
 \left(\frac{\Delta(A)}4\right)^d
 =\lambda^{2d}\bigl(\tr(UX)\bigr)^d\varepsilon^d
 +O(\varepsilon^{d+1}).
\]
Now \(2d=p-1\), so \(\lambda^{2d}=1\), and the constant term of the
traceless factor is \(\lambda U\).  Therefore
\[
 A^p=\lambda I+
 \lambda\bigl(\tr(UX)\bigr)^dU\,\varepsilon^d
 +O(\varepsilon^{d+1}).
\]
Factoring out \(\lambda\) and raising to the \(m\)-th power gives
\[
 A^{mp}=I+m\bigl(\tr(UX)\bigr)^dU\,\varepsilon^d
 +O(\varepsilon^{d+1}),
\]
because \(\lambda^m=1\) and every product of two positive-degree terms
has degree at least \(2d\ge d+1\).
\end{proof}

\begin{corollary}[Generic Jordan perturbative depth]
\label{cor:jordan-depth}
If \(\tr(UX)\ne0\), then the first nonzero full-order perturbative
coefficient occurs exactly at
\[
 \boxed{d_{\min}(p)=\frac{p-1}{2}.}
\]
If \(\tr(UX)=0\), Theorem~\ref{thm:jordan-depth} gives vanishing through
degree \(d\), but does not by itself determine the first later nonzero
degree.
\end{corollary}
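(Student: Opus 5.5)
The corollary follows directly from Theorem~\ref{thm:jordan-depth}; the remaining work is to check that the leading coefficient really is nonzero. The theorem gives, in \(M_2(\F_p[[\varepsilon]])\),
\[
 [\varepsilon^j][q(I+\varepsilon X)]^{mp}=0\quad(1\le j<d),
 \qquad
 [\varepsilon^d][q(I+\varepsilon X)]^{mp}=m\bigl(\tr(UX)\bigr)^dU.
\]
So all positive-degree coefficients below \(d\) vanish. The first claim then reduces to showing that the degree-\(d\) coefficient is a nonzero matrix whenever \(c:=\tr(UX)\ne0\).

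For that I will use three facts. First, \(c\in\F_p^\times\), so \(c^d\ne0\) because \(\F_p\) is a field. Second, \(m=\ord(\lambda)\) divides \(p-1\), so \(m\) is nonzero in \(\F_p\), exactly as in the No Period-Collapse corollary. Third, \(U\ne0\) by hypothesis. Together these give \(m c^d U\ne0\), so the first nonzero full-order perturbative coefficient sits exactly at degree \(d=(p-1)/2\). Since the value \(d_{\min}(p)=d\) does not depend on \(X\) beyond the open condition \(c\ne0\), the word ``generic'' is justified. When \(p=3\), \(d=1\) and the coefficient is \(m\,c\,U\). This is consistent with the Jordan full-order transfer dichotomy: there \(N_q(X)=m\,UXU\), and for \(U=E_{12}\) one has \(UXU=\tr(UX)U\).

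For the second statement, with \(c=0\) the theorem gives vanishing through degree \(d\): the coefficients below \(d\) vanish as before, and the degree-\(d\) coefficient is \(m\cdot0^d\cdot U=0\). What remains is to justify that the theorem does not pin down the first nonzero degree. The reason is visible in the proof of Theorem~\ref{thm:jordan-depth}. There only the lowest-order term of \(\bigl(\Delta(A)/4\bigr)^d\) was tracked, together with the constant term \(\lambda U\) of the traceless factor. When \(c=0\), Lemma~\ref{lem:jordan-discriminant-expansion} gives
\[
 \Delta(A(\varepsilon))=-4\lambda^2\det(X)\,\varepsilon^2,
\]
so \(\bigl(\Delta/4\bigr)^d=(-\lambda^2\det X)^d\varepsilon^{2d}\) and the leading behaviour moves to degree \(2d=p-1\).

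That degree-\((p-1)\) behaviour is where the real content lies, and it is the main obstacle if one wanted to sharpen this remark. One would need to combine three contributions at degree \(p-1\): the \(\varepsilon^{2d}\) coefficient \((-\det X)^d\lambda^{2d}\cdot\lambda U\); the Frobenius scalar term, which only contributes in degrees divisible by \(p\) and so not at \(p-1\); and the products of the \(\varepsilon^{2d}\) term with higher-order parts of the traceless factor, which have degree above \(p-1\). This would suggest depth \(p-1\) exactly when \(\det X\ne0\), together with a separate analysis for \(X\in\langle U\rangle\). For the corollary as stated, however, the argument only needs to note that the theorem is silent beyond degree \(d\) in this case.
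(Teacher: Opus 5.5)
Your proposal is correct and follows the paper's proof. Both read the vanishing below degree $d$ and the degree-$d$ coefficient $m(\tr(UX))^dU$ directly off Theorem~\ref{thm:jordan-depth}. You also usefully make explicit two facts the paper's one-line argument leaves implicit: $m\ne0$ in $\F_p$ (since $m\mid p-1$) and $U\ne0$. Your degree-$(p-1)$ sketch for $\tr(UX)=0$ is not needed for the corollary, but it is consistent with what the paper later proves in Theorem~\ref{thm:jordan-three-layer}.
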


\begin{proof}
If $\tr(UX)\ne0$, then its $d$-th power is nonzero in the field.
Theorem~\ref{thm:jordan-depth} shows that every lower positive-degree
coefficient vanishes and that the coefficient at degree $d$ is nonzero.
When $\tr(UX)=0$, that same theorem only implies vanishing through
degree $d$, as asserted.
\end{proof}

\begin{definition}[Full-order perturbative depth]
\label{def:full-order-depth}
For the present Jordan problem define
\[
 R_X(\varepsilon)=[q(I+\varepsilon X)]^{mp}-I.
\]
The full-order perturbative depth is
\[
 D_R(X)=\min\{j\ge1:[\varepsilon^j]R_X(\varepsilon)\ne0\},
\]
with \(D_R(X)=\infty\) if \(R_X(\varepsilon)\equiv0\) as a polynomial
matrix over \(\F_p\).  Thus infinite depth below refers only to this
full-order perturbative response.
\end{definition}

\begin{theorem}[Three-Layer Jordan Reconstruction-Depth Theorem]
\label{thm:jordan-three-layer}
Under the hypotheses of Theorem~\ref{thm:jordan-depth}, let
\[
 H_U=\{X\in\sltwo(\F_p):\tr(UX)=0\}.
\]
Then
\[
 \boxed{
 D_R(X)=
 \begin{cases}
 d,&X\notin H_U,\\
 p-1,&X\in H_U\setminus\langle U\rangle,\\
 \infty,&X\in\langle U\rangle,
 \end{cases}
 \qquad d=\frac{p-1}{2}.}
\]
For the second layer the leading coefficient has the invariant form
\[
 \boxed{
 [\varepsilon^{p-1}][q(I+\varepsilon X)]^{mp}
 =m(-\det X)^dU,
 \qquad X\in H_U\setminus\langle U\rangle.}
\]
In fact \((-\det X)^d=1\) on this stratum, so the coefficient is \(mU\).
On the final axis one has the exact identity
\[
 \boxed{[q(I+\varepsilon X)]^{mp}=I
 \qquad(X\in\langle U\rangle).}
\]
\end{theorem}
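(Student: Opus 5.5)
The plan is to reuse Lemma~\ref{lem:ch-discriminant} exactly, not just to leading order, and to read off each of the three strata from the resulting closed form. The first layer, $X\notin H_U$, is already Corollary~\ref{cor:jordan-depth}: there $\tr(UX)\ne0$, so $D_R(X)=d$.

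For $X\in H_U$, put $c=\tr(UX)=0$ and $A=A(\varepsilon)=q(I+\varepsilon X)$.
\begin{itemize}
\item By Lemma~\ref{lem:jordan-discriminant-expansion}, $\tr A=2\lambda$ exactly and $\Delta(A)=-4\lambda^2\det X\,\varepsilon^2$ exactly.
\item Hence $(\Delta/4)^d=\lambda^{p-1}(-\det X)^d\varepsilon^{p-1}=(-\det X)^d\varepsilon^{p-1}$.
\item The scalar term is $(\tr A/2)^p=\lambda^p=\lambda$.
\item The traceless part is $Z=A-\lambda I=\lambda\bigl(U+\varepsilon(X+UX)\bigr)$.
\end{itemize}
Lemma~\ref{lem:ch-discriminant} then gives the exact identity
\[
 A^p=\lambda\bigl(I+W\bigr),\qquad W=(-\det X)^d\varepsilon^{p-1}\bigl(U+\varepsilon(X+UX)\bigr).
\]
Raising to the $m$-th power and using $\lambda^m=1$ gives $A^{mp}=(I+W)^m=I+mW+\binom m2W^2+\cdots$. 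Every $W^k$ with $k\ge2$ has $\varepsilon$-degree at least $2p-2>p-1$. Therefore all coefficients of $R_X$ in degrees $1,\dots,p-2$ vanish, and
\[
 [\varepsilon^{p-1}]A^{mp}=m(-\det X)^dU,
\]
which is the claimed invariant form.

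Next I evaluate $(-\det X)^d$ on $H_U$. Conjugating by some $g\in GL_2(\F_p)$ and rescaling $U$ by a nonzero scalar, I can take $U=E_{12}$, as in the rank lemma. This changes $\tr(UX)$ only by a nonzero factor, and it preserves $\det X$ and the line $\langle U\rangle$. Writing $X=\begin{pmatrix}a&b\\ c&-a\end{pmatrix}$, one has $\tr(E_{12}X)=c$, so $H_U$ is $\{c=0\}$ and $-\det X=a^2$. Then $X\in\langle U\rangle$ iff $a=0$. On $H_U\setminus\langle U\rangle$ we get $(-\det X)^d=a^{p-1}=1\neq0$, so the coefficient is $mU\ne0$ (as $m\mid p-1$) and $D_R(X)=p-1$.

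For the axis $X=sU$, I argue directly rather than through the closed form. Since $U^2=0$,
\[
 q(I+\varepsilon sU)=\lambda\bigl(I+(1+\varepsilon s)U\bigr).
\]
Its $mp$-th power is $\lambda^{mp}\bigl(I+mp(1+\varepsilon s)U\bigr)=I$, because $p=0$ in $\F_p[[\varepsilon]]$ and $\lambda^m=1$. So $R_X\equiv0$ and $D_R(X)=\infty$. This is also consistent with the closed form above, since $\det X=0$ gives $W=0$.

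The only delicate point is making sure the closed form for $A^p$ is exact, so that the lower-degree vanishing is genuinely established and not just an artifact of truncation. This holds because Lemma~\ref{lem:ch-discriminant} is an identity in $M_2(\F_p[\varepsilon])$, and because with $c=0$ both $\tr A$ and $\Delta(A)$ are exact monomials. The normalization $U=E_{12}$ just needs a one-line check that every quantity used is conjugation-invariant up to nonzero scalars.
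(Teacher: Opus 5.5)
Your proposal is correct and follows essentially the same route as the paper: on $H_U$ you apply Lemma~\ref{lem:ch-discriminant} exactly, using $\tr A=2\lambda$ and the fact that $\Delta$ is a pure $\varepsilon^2$ monomial, read off the $\varepsilon^{p-1}$ coefficient after raising to the $m$-th power, normalize to $U=E_{12}$ to get $(-\det X)^d=a^{p-1}=1$, and treat the axis $X\in\langle U\rangle$ directly via $(I+sU)^p=I$. The only cosmetic difference is that you keep $(-\det X)^d$ symbolic before passing to coordinates, whereas the paper works in coordinates first and then rewrites $a^{p-1}$ as $(-\det X)^d$.
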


\begin{proof}
The first stratum is Corollary~\ref{cor:jordan-depth}.  It remains to
analyze \(H_U\).  Simultaneous conjugation of \(q,U,X\) preserves
\(\tr(UX)\), \(\det X\), membership in \(\langle U\rangle\), and the
full-order response up to conjugation, so we may take
\[
 U=\begin{pmatrix}0&1\\0&0\end{pmatrix}.
\]
Writing a general traceless matrix as
\[
 X=\begin{pmatrix}a&b\\c&-a\end{pmatrix},
\]
one has \(\tr(UX)=c\).  Hence on \(H_U\),
\[
 X=\begin{pmatrix}a&b\\0&-a\end{pmatrix},
 \qquad \det X=-a^2.
\]
Moreover \(X\in\langle U\rangle\) exactly when \(a=0\).

Suppose first that \(X\in H_U\setminus\langle U\rangle\), so \(a\ne0\).
The exact discriminant formula of
Lemma~\ref{lem:jordan-discriminant-expansion} becomes
\[
 \Delta(A(\varepsilon))=4\lambda^2a^2\varepsilon^2.
\]
Thus, with \(d=(p-1)/2\),
\[
 \left(\frac{\Delta}{4}\right)^d
 =a^{p-1}\varepsilon^{p-1}.
\]
Here \(\lambda^{p-1}=1\).  Also \(\tr A=2\lambda\), so the scalar
term in Lemma~\ref{lem:ch-discriminant} is \(\lambda I\).  Since
\[
 A-\lambda I=\lambda\bigl(U+\varepsilon(X+UX)\bigr),
\]
we obtain the exact polynomial identity
\[
 A^p=\lambda\left[
 I+a^{p-1}\varepsilon^{p-1}U
 +a^{p-1}\varepsilon^p(X+UX)\right].
\]
Raising to the \(m\)-th power shows that the coefficient of
\(\varepsilon^{p-1}\) in \(A^{mp}\) is \(ma^{p-1}U=mU\), and no
smaller positive degree occurs.  Because \(-\det X=a^2\), this is
precisely
\[
 m(-\det X)^dU.
\]
This proves the second stratum and its invariant coefficient formula.

Finally, if \(X=bU\), then
\[
 q(I+\varepsilon X)
 =\lambda\bigl[I+(1+b\varepsilon)U\bigr].
\]
Since \(U^2=0\), in characteristic \(p\)
\[
 [I+sU]^p=I+psU=I.
\]
Therefore \(A^p=\lambda I\) and \(A^{mp}=I\) identically in
\(\varepsilon\).  Hence \(D_R(X)=\infty\) on the final axis.
\end{proof}

\begin{corollary}[Exact Jordan response flag]
\label{cor:jordan-response-flag}
In the setting of Theorem~\ref{thm:jordan-three-layer}, the invariant flag
\[
 \sltwo(\F_p)\supset H_U\supset\langle U\rangle
\]
has dimensions \(3,2,1\), respectively.  Consequently its three response
strata have cardinalities
\[
 \boxed{
 |\sltwo(\F_p)\setminus H_U|=p^2(p-1),\qquad
 |H_U\setminus\langle U\rangle|=p(p-1),\qquad
 |\langle U\rangle|=p.}
\]
The corresponding full-order perturbative depths are
\[
 \boxed{\frac{p-1}{2},\qquad p-1,\qquad \infty.}
\]
In particular, the second finite response depth is exactly twice the first.
\end{corollary}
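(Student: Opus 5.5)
The corollary combines linear-algebra dimension counts with the depth values already established in Theorem~\ref{thm:jordan-three-layer}. The plan is to check the flag first, then count the strata, then read off the depths.

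\emph{Step 1: the flag.} The trace pairing \(\langle X,Y\rangle=\tr(XY)\) is nondegenerate on \(\sltwo(\F_p)\) for odd \(p\). The functional \(X\mapsto\tr(UX)\) is therefore nonzero, since \(U\ne0\) and \(U\) is traceless. Hence \(H_U\) is a hyperplane, of dimension \(2\). It contains \(U\) because \(\tr(U^2)=0\). So \(\langle U\rangle\subset H_U\) with dimensions \(1\subset2\subset3\).

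Invariance of the flag should be understood as invariance under conjugation by the centralizer of \(q\). Any \(g\) commuting with \(U\) fixes \(\langle U\rangle\), and it preserves \(H_U\) because \(\tr(U\,gXg^{-1})=\tr(g^{-1}UgX)=\tr(UX)\). Alternatively one can just use the normal form \(U=E_{12}\) from the proof of Theorem~\ref{thm:jordan-three-layer}. There \(H_U=\{c=0\}\) and \(\langle U\rangle=\{a=c=0\}\), which makes the dimensions immediate.

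\emph{Step 2: cardinalities.} A \(k\)-dimensional \(\F_p\)-space has \(p^k\) elements. The three strata are successive set differences, so their sizes are
\[
p^3-p^2=p^2(p-1),\qquad p^2-p=p(p-1),\qquad p.
\]

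\emph{Step 3: depths.} Read these directly from the three cases of Theorem~\ref{thm:jordan-three-layer}: \(d=(p-1)/2\) off \(H_U\), \(p-1\) on \(H_U\setminus\langle U\rangle\), and \(\infty\) on \(\langle U\rangle\). The final remark is the identity \(p-1=2\cdot\frac{p-1}{2}\).

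No step is a real obstacle. The only point needing care is the nondegeneracy of the trace form, which guarantees that \(H_U\) is a genuine hyperplane rather than all of \(\sltwo(\F_p)\). The explicit coordinates settle this anyway.
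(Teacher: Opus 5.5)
Your proposal is correct and follows the paper's proof: the nonzero functional \(X\mapsto\tr(UX)\) makes \(H_U\) a hyperplane containing the line \(\langle U\rangle\), the stratum sizes are the successive differences \(p^3-p^2\), \(p^2-p\), \(p\), and the depths are read off from Theorem~\ref{thm:jordan-three-layer}. Your extra remarks, namely nondegeneracy of the trace form to justify that the functional is nonzero and \(\tr(U^2)=0\) to place \(U\) in \(H_U\), simply make explicit what the paper leaves implicit.
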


\begin{proof}
The linear functional \(X\mapsto\tr(UX)\) is nonzero, so \(H_U\) is a
codimension-one subspace of the three-dimensional space
\(\sltwo(\F_p)\).  Hence \(|H_U|=p^2\).  Since \(U\ne0\), the axis
\(\langle U\rangle\) is one-dimensional and has \(p\) elements.  Taking
successive set differences gives the displayed cardinalities, while the
depths are those of Theorem~\ref{thm:jordan-three-layer}.
\end{proof}

\begin{corollary}[Jordan discriminant-contact law]
\label{cor:jordan-discriminant-contact}
In the setting of Theorem~\ref{thm:jordan-three-layer}, with the
convention \(v_\varepsilon(0)=\infty\),
\[
 v_\varepsilon(\Delta_X)=
 \begin{cases}
 1,&X\notin H_U,\\
 2,&X\in H_U\setminus\langle U\rangle,\\
 \infty,&X\in\langle U\rangle,
 \end{cases}
\]
and consequently
\[
 \boxed{D_R(X)=\frac{p-1}{2}\,v_\varepsilon(\Delta_X).}
\]
This identity is asserted only for the present \(2\times2\) Jordan
perturbation problem; no general discriminant-depth principle is claimed.
\end{corollary}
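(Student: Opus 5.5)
The plan is to compute the \(\varepsilon\)-adic valuation of \(\Delta_X=\Delta(A(\varepsilon))\) directly from the exact formula of Lemma~\ref{lem:jordan-discriminant-expansion},
\[
 \Delta_X=\lambda^2\bigl(4c\,\varepsilon+(c^2-4\det X)\varepsilon^2\bigr),\qquad c=\tr(UX).
\]
I will then compare the three resulting values with the depths already established in Theorem~\ref{thm:jordan-three-layer}. Since \(\lambda\ne0\) and \(4\) is invertible for odd \(p\), the valuation depends only on the polynomial \(4c\varepsilon+(c^2-4\det X)\varepsilon^2\).

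The three strata are handled in order.

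\textbf{Off the hyperplane.} If \(X\notin H_U\), then \(c\ne0\). The linear coefficient \(4\lambda^2c\) is nonzero, so \(v_\varepsilon(\Delta_X)=1\).

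\textbf{On \(H_U\) but off the axis.} If \(X\in H_U\setminus\langle U\rangle\), then \(c=0\) and \(\Delta_X=-4\lambda^2\det X\,\varepsilon^2\). I will reuse the normalization from the proof of Theorem~\ref{thm:jordan-three-layer}: simultaneous conjugation preserves \(\tr(UX)\), \(\det X\) and membership in \(\langle U\rangle\), so we may take \(U=E_{12}\) and \(X=\begin{pmatrix}a&b\\0&-a\end{pmatrix}\) with \(a\ne0\). Then \(-\det X=a^2\ne0\), giving \(\Delta_X=4\lambda^2a^2\varepsilon^2\) and valuation \(2\).

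\textbf{On the axis.} If \(X=bU\), then \(c=\tr(bU^2)=0\) and \(\det X=b^2\det U=0\), since \(U\) is nonzero nilpotent. Hence \(\Delta_X\equiv0\) and the valuation is \(\infty\).

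Finally I multiply by \(d=(p-1)/2\), using the convention \(d\cdot\infty=\infty\). This gives \(d,\ 2d=p-1,\ \infty\), which are exactly the values of \(D_R(X)\) in Theorem~\ref{thm:jordan-three-layer}, and the boxed identity follows stratum by stratum.

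There is no real obstacle here. The only point needing care is the middle stratum, where one must check that \(\det X\ne0\) whenever \(c=0\) and \(X\notin\langle U\rangle\); the normal form above settles this. One should also record the convention \(d\cdot\infty=\infty\) explicitly so that the identity holds on the axis.
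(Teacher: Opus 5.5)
Your proposal is correct and takes the same approach as the paper: read off the $\varepsilon$-adic valuation of the exact discriminant formula on each of the three strata, using the normal form $U=E_{12}$ on $H_U$, and match the results against the depths of Theorem~\ref{thm:jordan-three-layer}. Your explicit convention $\frac{p-1}{2}\cdot\infty=\infty$ on the axis is a small but welcome addition; the paper leaves it implicit.
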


\begin{proof}
The exact expansion in Lemma~\ref{lem:jordan-discriminant-expansion}
has a nonzero linear term when $\tr(UX)\ne0$.  On $H_U$, its
quadratic coefficient equals $-4\lambda^2\det X$.  In the coordinates
used in Theorem~\ref{thm:jordan-three-layer}, this is nonzero precisely
when $X\notin\langle U\rangle$.  On $\langle U\rangle$ the entire
discriminant vanishes.  Comparing these three valuations with the
three depths of Theorem~\ref{thm:jordan-three-layer} gives the formula.
\end{proof}

\begin{corollary}[Critical Jordan axis and quadratic character]
\label{cor:jordan-character}
At the critical degree, the response lies in
\[
 \langle U\rangle=C_{\sltwo(\F_p)}(q).
\]
For \(\tr(UX)\ne0\), Euler's criterion gives
\[
 \boxed{
 [\varepsilon^d][q(I+\varepsilon X)]^{mp}
 =m\left(\frac{\tr(UX)}p\right)U,}
\]
where \((\frac{\cdot}{p})\) denotes the Legendre symbol.
\end{corollary}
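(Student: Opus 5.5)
The argument has two parts: locating the response on the centralizer axis, and then turning the power $(\tr(UX))^d$ into a Legendre symbol.

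\emph{Axis identification.} By Theorem~\ref{thm:jordan-depth}, the degree-$d$ coefficient of $[q(I+\varepsilon X)]^{mp}$ is
\[
m\bigl(\tr(UX)\bigr)^dU,
\]
which is a scalar multiple of $U$ for every $X$. So it suffices to identify $\langle U\rangle$ with $C_{\sltwo(\F_p)}(q)$. The earlier centralizer lemma gives $\Cent(q)\cap\sltwo(\F_p)=\langle A_q\rangle$. For $q=\lambda(I+U)$ we have $\tr q=2\lambda$, so $A_q=\lambda U$. Since $\lambda\ne0$, this yields $\langle A_q\rangle=\langle U\rangle$. This also holds when $\tr(UX)=0$, where the coefficient is simply zero and lies in the axis trivially.

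\emph{Character formula.} Take $\tr(UX)=c\ne0$ in $\F_p$. Euler's criterion states $c^{(p-1)/2}\equiv\left(\frac{c}{p}\right)\pmod p$, and the right side is $\pm1$. Substituting $d=(p-1)/2$ into the coefficient from Theorem~\ref{thm:jordan-depth} gives
\[
m\,c^dU=m\left(\frac{c}{p}\right)U,
\]
which is the boxed identity.

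There is no real obstacle. The only care needed is to note that $m$ is read as an integer reduced into $\F_p$, where it is nonzero because $m\mid p-1$. Consequently the coefficient is a nonzero multiple of $U$ whose sign is governed by the quadratic character of $\tr(UX)$.
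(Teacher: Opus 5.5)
Your proposal is correct and follows the paper's proof: you read off the degree-$d$ coefficient $m\bigl(\tr(UX)\bigr)^dU$ from Theorem~\ref{thm:jordan-depth}, which lies in $\langle U\rangle$, and then substitute Euler's criterion $c^{(p-1)/2}=\left(\frac{c}{p}\right)$. Your explicit check that $A_q=\lambda U$, which identifies $\langle U\rangle$ with the traceless centralizer of $q$, fills in a small step that the paper leaves implicit.
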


\begin{proof}
Theorem~\ref{thm:jordan-depth} places the critical coefficient in
$\langle U\rangle$.  For $c=\tr(UX)\ne0$, Euler's criterion in
$\F_p$ gives $c^{(p-1)/2}=(\frac{c}{p})$.  Substitute this identity
into the coefficient formula of that theorem.
\end{proof}

\begin{corollary}[Recovery of the characteristic-three Jordan law]
For \(p=3\), one has \(d=1\), and
\[
 [\varepsilon][q(I+\varepsilon X)]^{3m}
 =m\tr(UX)U.
\]
For a nonzero nilpotent \(2\times2\) matrix \(U\) and traceless \(X\),
\(UXU=\tr(UX)U\).  Hence
\[
 [\varepsilon][q(I+\varepsilon X)]^{3m}=mUXU,
\]
which is exactly the full-order characteristic-three Jordan transfer law
of Theorem~\ref{thm:global-transfer}.
\end{corollary}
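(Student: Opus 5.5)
The statement has two parts. The first is a specialization of Theorem~\ref{thm:jordan-depth} to $p=3$. The second is a matrix identity, $UXU=\tr(UX)U$ for nilpotent $U$ and traceless $X$. I will prove them in that order and then compare with the Jordan branch of Theorem~\ref{thm:global-transfer}.

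\textbf{Step 1: specialize the depth theorem.} For $p=3$ one has $d=(p-1)/2=1$ and $mp=3m$. Theorem~\ref{thm:jordan-depth} therefore gives
\[
[\varepsilon][q(I+\varepsilon X)]^{3m}=m\,\tr(UX)^1U
\]
directly. No new computation is needed here, since the depth theorem was proved for all odd $p$ by means of Lemma~\ref{lem:ch-discriminant}.

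\textbf{Step 2: the identity $UXU=\tr(UX)U$.} This step is basis-free in principle, but the quickest route is the conjugation used in the rank lemma for $J_U$. Both sides are linear in $X$. Both sides are also homogeneous of degree two in $U$, since $\tr(UX)U$ scales quadratically under $U\mapsto\mu U$. Finally, both sides transform compatibly under simultaneous conjugation $U\mapsto gUg^{-1}$, $X\mapsto gXg^{-1}$, because the trace is conjugation-invariant. Hence we may assume $U=E_{12}$.

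Write $X=\begin{pmatrix}a&b\\c&-a\end{pmatrix}$. The rank lemma already computed $E_{12}XE_{12}=cE_{12}$. On the other side, $E_{12}X$ has diagonal entries $c$ and $0$, so $\tr(E_{12}X)=c$. This proves the identity.

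A conjugation-free check is also available. Cayley--Hamilton for the matrix $UX$ gives $(UX)^2=\tr(UX)\,UX-\det(UX)I$. Since $\det U=0$, this reduces to $(UX)^2=\tr(UX)UX$. However, turning this into $UXU=\tr(UX)U$ would require cancelling $X$, which fails when $X$ is singular, so the explicit conjugation argument is cleaner.

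\textbf{Step 3: comparison with the global theorem.} Substituting Step 2 into Step 1 gives $[\varepsilon][q(I+\varepsilon X)]^{3m}=mUXU$.

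It remains to match this with Theorem~\ref{thm:global-transfer}. There the first variation is $N_q^{(r)}(X)$, which is defined as a sum of the form $q^{-j}Xq^{j}$. The $\varepsilon$-coefficient of $[q(I+\varepsilon X)]^{r}$, by contrast, is
\[
\sum_{j} q^{r-j}Xq^{j}.
\]
These agree because $q^{r}=I$ for $r=\ord(q)=3m$, so $q^{r-j}=q^{-j}$. This is the same reindexing used in the affine first-variation proposition. The Jordan case of Theorem~\ref{thm:global-transfer} at $p=3$ then reads $N_q(X)=mUXU$, which matches.

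\textbf{Main obstacle.} The only delicate point is the comparison in Step 3: checking that the coefficient of $\varepsilon$ equals $N_q$ with the paper's $\Ad$ convention, rather than its transpose under $j\mapsto -j$. Since the unipotent sum is symmetric in $j\mapsto -j$ (only $j^2$ survives), either convention gives the same result.
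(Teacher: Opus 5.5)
Your proposal is correct and follows the paper's own proof: you specialize Theorem~\ref{thm:jordan-depth} to $d=1$, then verify $UXU=\tr(UX)U$ by conjugating $U$ to $E_{12}$, where both sides equal $cE_{12}$. Your Step~3 check, that the $\varepsilon$-coefficient $\sum_{j} q^{r-j}Xq^{j}$ equals $N_q(X)$ because $q^{r}=I$, is a harmless explicit addition that the paper leaves implicit. Moreover, over a full period $j\mapsto -j$ is a bijection of $\Z/r\Z$, so the convention question you raise never actually arises.
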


\begin{proof}
At $p=3$ one has $d=1$.  Conjugating $U$ to $E_{12}$ and writing
$X=\left(\begin{smallmatrix}a&b\\c&-a\end{smallmatrix}\right)$
gives $UXU=cU=\tr(UX)U$.  The first-order coefficient of
Theorem~\ref{thm:jordan-depth} is therefore $mUXU$, as required.
\end{proof}

\begin{remark}[First-order dormancy as delayed visibility]
Theorem~\ref{thm:jordan-depth} does not alter the first-order transfer
classification: for nonscalar Jordan elements, \(N_q=0\) when \(p>3\).
It explains that vanishing as a depth effect.  Characteristic \(3\) is the
unique odd characteristic for which \((p-1)/2=1\); for larger odd primes
the same Jordan channel first becomes visible at higher perturbative order.
\end{remark}

\section{Order lifting in characteristic three}
\subsection{Affine fiber geometry}

Suppose the relevant length-\(r\) norm in
\eqref{eq:first-variation} has rank one and the affine equation
\[
 D_0+N_q^{(r)}(X)=0
\]
is solvable.  Then every zero fiber is a coset of the kernel.

Since
\[
 \dim\sltwo(\F_p)=3,\qquad\rank N=1,
\]
we have
\[
 \dim\ker N=2,\qquad |\ker N|=p^2.
\]

\begin{corollary}[Rank-one branching]
In a rank-one solvable affine state, exactly \(p^2\) of the \(p^3\)
determinant-compatible tangent lifts have zero defect and
\[
p^3-p^2=p^2(p-1)
\]
have nonzero defect.
\end{corollary}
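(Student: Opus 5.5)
The plan is to combine the tangent torsor lemma with the affine first variation and then count the solutions of one affine linear equation over \(\F_p\). By the Tangent torsor lemma, the determinant-compatible lifts over the fixed parent correspond bijectively to \(X\in\sltwo(\F_p)\) via \(C=C_0(I+p^kX)\). There are \(p^3\) of them. By the affine first variation \eqref{eq:first-variation}, the defect of the lift labelled by \(X\) is
\[
 D_r(C)=D_0+N(X),\qquad N=N_q^{(r)} .
\]
So the lifts with zero defect are exactly the solutions of the affine equation \(D_0+N(X)=0\).

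First, I use that the state is solvable: there is some \(X_0\) with \(N(X_0)=-D_0\). Then, by linearity of \(N\), the solution set is exactly the coset \(X_0+\ker N\). Second, since \(\rank N=1\) on the three-dimensional space \(\sltwo(\F_p)\), rank--nullity gives \(\dim\ker N=2\). Hence this coset has \(p^2\) elements, as recorded just before the statement. The remaining \(p^3-p^2=p^2(p-1)\) tangent lifts therefore have nonzero defect.

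Finally, if one wants the order-theoretic reading, the Exact one-step order-defect dichotomy turns these counts into order statements. The \(p^2\) zero-defect lifts preserve the order \(r\), and the other \(p^2(p-1)\) lifts have order \(pr\). This requires that the parent has exact order \(r\) modulo \(p^k\), which is part of the setting in which \(D_r\) is defined.

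I do not expect any real obstacle; the statement is a direct count. The only point needing care is bookkeeping. The bijection \(X\leftrightarrow C\) must be injective, which is the uniqueness clause of the torsor lemma. Solvability must be used to guarantee that the zero fiber is nonempty; otherwise there would be no zero-defect lifts at all.
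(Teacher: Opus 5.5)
Your proposal is correct and matches the paper's proof: solvability makes the zero-defect set the coset $X_0+\ker N$, rank--nullity gives it $p^2$ elements, and the remaining $p^3-p^2=p^2(p-1)$ lifts have nonzero defect. Your explicit appeal to the uniqueness clause of the torsor lemma and to the one-step order-defect dichotomy is bookkeeping that the paper leaves implicit.
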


\begin{proof}
If the affine equation has a solution $X_0$, its solution set is
$X_0+\ker N$.  Rank--nullity gives $\dim\ker N=2$, so there are
$p^2$ zero-defect solutions among the $p^3$ tangent lifts.  All
remaining $p^3-p^2$ lifts have nonzero defect.
\end{proof}

Thus the local decomposition is
\[
 \boxed{p^3=p^2+p^2(p-1)}.
\]

\subsection{Order-preserving states and persistence}
Here $\PRE$ (order-preserving) means exact order $r_0=\ord(q)$;
$\ACT$ (activated) means that the order has acquired a positive
power of $3$ relative to $r_0$.

We now specialize to \(p=3\).  Let \(q\in GL_2(\F_3)\) be non-scalar and
write
\[
 r_0=\ord(q).
\]
By the transfer classification and the general no-period-collapse result, the
full-order transfer \(N_q^{(r_0)}\) has rank one and
\[
 \im N_q^{(r_0)}=\langle A_q\rangle.
\]

\begin{lemma}[Defect-axis lemma]
Let \(C_k\in G_{3^k}\) have exact order \(r_0\), and let \(C_0\) be any
determinant-compatible lift to modulus \(3^{k+1}\).  Write
\[
 C_0^{r_0}=I+3^kD\pmod{3^{k+1}},\qquad D\in M_2(\F_3).
\]
Then
\[
 [q,D]=0,\qquad \tr D=0,
\]
and hence
\[
 D\in \Cent(q)\cap\sltwo(\F_3)=\langle A_q\rangle.
\]
\end{lemma}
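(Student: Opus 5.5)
The two identities \([q,D]=0\) and \(\tr D=0\) will be proved separately, by lifting exact matrix identities to the ring \(\Z/3^{k+1}\Z\) and then reducing modulo \(3\). The final inclusion is then immediate from the centralizer lemma of Section~3: \(q\) is non-scalar and \(D\) is a traceless matrix commuting with \(q\), so \(D\in\Cent(q)\cap\sltwo(\F_3)=\langle A_q\rangle\).

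\emph{Commutation.} Work in \(M_2(\Z/3^{k+1}\Z)\). The matrix \(C_0\) commutes with its own power \(C_0^{r_0}=I+3^kD\), so
\[
 C_0(I+3^kD)=(I+3^kD)C_0,
\]
which gives \(3^k(C_0D-DC_0)\equiv0\pmod{3^{k+1}}\). Dividing by \(3^k\) yields \(C_0D\equiv DC_0\pmod 3\), that is, \([q,D]=0\) in \(M_2(\F_3)\). The only point to check is that \(3^kY\equiv0\pmod{3^{k+1}}\) forces \(Y\equiv0\pmod 3\). This holds because multiplication by \(3^k\) induces an isomorphism \(\Z/3\Z\to 3^k\Z/3^{k+1}\Z\).

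\emph{Tracelessness.} Take determinants. First-order expansion, as in the tangent-torsor lemma, gives
\[
 \det(I+3^kD)\equiv1+3^k\tr D\pmod{3^{k+1}},
\]
and this is valid for every \(k\ge1\). Determinant compatibility gives \(\det C_0\in\{\pm1\}\) modulo \(3^{k+1}\). Hence \(\det(C_0^{r_0})=(\det C_0)^{r_0}\) is exactly \(\pm1\) modulo \(3^{k+1}\). Comparing the two expressions:
\begin{itemize}
\item if the sign is \(+1\), then \(3^k\tr D\equiv0\pmod{3^{k+1}}\), so \(\tr D=0\) in \(\F_3\);
\item if the sign is \(-1\), then \(1+3^k\tr D\equiv-1\pmod{3^{k+1}}\), and reducing modulo \(3\) gives \(1\equiv-1\), which is false for odd \(p\).
\end{itemize}
So the sign must be \(+1\), and \(\tr D=0\).

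This sign issue is the main, and only, subtle step. One must make sure that \((\det C_0)^{r_0}=-1\) cannot occur. Besides the direct argument above, it also follows from the hypothesis itself: \(C_0^{r_0}\equiv I\pmod{3^k}\) forces \((\det C_0)^{r_0}\equiv1\pmod{3^k}\), and \(-1\not\equiv1\) modulo \(3^k\). Everything else in the proof is first-order bookkeeping.
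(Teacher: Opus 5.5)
Your proof is correct and follows the paper's argument essentially step for step. You use the commutator of $C_0$ with its own power $I+3^kD$ to get $[q,D]=0$, and the first-order expansion $\det(I+3^kD)\equiv 1+3^k\tr D \pmod{3^{k+1}}$ together with $(\det C_0)^{r_0}=1$ to get $\tr D=0$. The sign exclusion you give, ruling out $(\det C_0)^{r_0}=-1$ by reducing modulo $3$ or via $C_0^{r_0}\equiv I\pmod{3^k}$, is the same observation the paper derives from the order of the reduction modulo $3^k$.
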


\begin{proof}
Since \(C_0\) commutes with its power \(C_0^{r_0}\),
\[
 0=[C_0,C_0^{r_0}]
   \equiv 3^k[C_0,D]\pmod{3^{k+1}}.
\]
Dividing by \(3^k\) and reducing modulo \(3\) gives \([q,D]=0\).

Next, determinant compatibility gives \(\det C_0=\varepsilon\) with
\(\varepsilon\in\{\pm1\}\) at modulus \(3^{k+1}\).  Because the
reduction of \(C_0\) modulo \(3^k\) has order \(r_0\), its determinant
satisfies \(\varepsilon^{r_0}=1\); in particular, if
\(\varepsilon=-1\), then \(r_0\) is even.  Thus
\[
 1=\det(C_0^{r_0})
  =\det(I+3^kD)
  \equiv1+3^k\tr D\pmod{3^{k+1}},
\]
so \(\tr D=0\) in \(\F_3\).  For non-scalar \(q\), its centralizer in
\(M_2(\F_3)\) is \(\langle I,q\rangle\); intersecting with the traceless
subspace leaves the one-dimensional axis \(\langle A_q\rangle\).
\end{proof}

\begin{theorem}[PRE-Persistence Theorem]
Let \(q\in GL_2(\F_3)\) be non-scalar and let
\(r_0=\ord(q)\).  Suppose \(C_k\in G_{3^k}\) reduces to \(q\) modulo
\(3\) and has exact order \(r_0\).  Then among the \(27\)
determinant-compatible lifts of \(C_k\) to modulus \(3^{k+1}\), exactly
\(9\) have exact order \(r_0\) and exactly \(18\) have exact order
\(3r_0\).  In particular, every PRE state has PRE descendants at the
next level, so PRE persistence holds for all \(k\ge1\).
\end{theorem}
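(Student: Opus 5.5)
We combine the tangent torsor lemma, the affine first variation, the order-defect dichotomy, and the rank-one transfer classification at $p=3$. Fix one determinant-compatible reference lift $C_0$ of $C_k$ to modulus $3^{k+1}$. By the tangent torsor lemma the $27$ determinant-compatible lifts are exactly $C_0(I+3^kX)$ with $X\in\sltwo(\F_3)$, and distinct $X$ give distinct lifts. Since $C_k$ has exact order $r_0$ modulo $3^k$, each lift satisfies $C^{r_0}\equiv I\pmod{3^k}$. The exact one-step order-defect dichotomy then says that each lift has order $r_0$ or $3r_0$ modulo $3^{k+1}$, according as its defect $D_{r_0}(C)$ vanishes or not. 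By the affine first variation, this defect is
\[
 D_{r_0}(C_0(I+3^kX))=D_0+N_q^{(r_0)}(X),\qquad D_0=D_{r_0}(C_0).
\]
So we only need to count the zeros of this affine map on $\sltwo(\F_3)$.

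By Theorem~\ref{thm:global-transfer} and the no-period-collapse corollary, $N_q^{(r_0)}$ has rank one for non-scalar $q$ at $p=3$, in both the semisimple and Jordan cases. Its image is $\langle A_q\rangle$: by the image-axis principle the image lies in $\langle A_q\rangle$, and rank one forces equality. By the defect-axis lemma, $D_0$ also lies in $\langle A_q\rangle$. Hence $-D_0\in\im N_q^{(r_0)}$, and the affine equation $D_0+N_q^{(r_0)}(X)=0$ is solvable. The rank-one branching corollary then gives exactly $3^2=9$ zero-defect lifts, which have order $r_0$, and $27-9=18$ nonzero-defect lifts, which have order $3r_0$. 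The existence of at least one order-preserving lift at each level gives PRE persistence by induction on $k$, since each such lift again satisfies the hypotheses at level $k+1$.

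The main obstacle is solvability: we must show that $D_0$ lies in the image line. This is exactly where the defect-axis lemma is needed. Its trace condition relies on the determinant constraint: $\det C_0=\pm1$ exactly modulo $3^{k+1}$, and $(\pm1)^{r_0}=1$ because the parent has order $r_0$. Without this, $D_0$ could have a nonzero trace component outside $\langle A_q\rangle$, and the count would collapse to $0+27$.

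Two further checks are needed.

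\textbf{Exact order of the parent.} The dichotomy proposition requires the reduction modulo $3^k$ to have exact order $r_0$, which holds by hypothesis. For $k=1$ this is just $\ord(q)=r_0$.

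\textbf{Characteristic-three transfer.} In the Jordan case the relevant operator is $m\,UXU$, whose image is $\langle U\rangle$. This equals $\langle A_q\rangle$, since $A_q=\lambda U$.
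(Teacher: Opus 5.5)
Your proposal is correct and follows essentially the same route as the paper's proof. Both arguments take a reference lift, use the defect-axis lemma to place $D_0$ on $\langle A_q\rangle$, and use the rank-one characteristic-three transfer with image exactly $\langle A_q\rangle$ to get solvability. The resulting $9$-element kernel coset of zero-defect lifts, the one-step dichotomy for the $9/18$ split, and induction through the order-preserving children complete both arguments. Your explicit check that $\langle U\rangle=\langle A_q\rangle$ in the Jordan case (via $A_q=\lambda U$) is a small detail the paper leaves implicit.
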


\begin{proof}
Choose any determinant-compatible reference lift \(C_0\).  By the
defect-axis lemma,
\[
 D_{r_0}(C_0)\in\langle A_q\rangle.
\]
In characteristic three the full-order transfer has rank one and
\[
 \im N_q^{(r_0)}=\langle A_q\rangle.
\]
Consequently
\[
 -D_{r_0}(C_0)\in\im N_q^{(r_0)},
\]
so the affine correction equation
\[
 D_{r_0}(C_0)+N_q^{(r_0)}(X)=0
\]
is automatically solvable.  The determinant-compatible lifts form an
affine torsor over the three-dimensional space \(\sltwo(\F_3)\).
Since \(N_q^{(r_0)}\) has rank one, its kernel has dimension two and
contains exactly \(3^2=9\) elements.  Hence the zero-defect affine
fiber has exactly \(9\) elements.  By the exact one-step order-defect
dichotomy, these \(9\) lifts have exact order \(r_0\), while the other
\(18\) have exact order \(3r_0\).

Every one of the nine order-preserving children again has exact order
\(r_0\), the same non-scalar reduction \(q\), and determinant
\(\pm1\).  Therefore the same argument applies at the next level and,
by induction, at every subsequent level.
\end{proof}

\begin{corollary}[PRE step with fixed determinant]
For every non-scalar characteristic-three PRE state,
\[
 \boxed{\PRE\longrightarrow9\PRE+18\ACT.}
\]
\end{corollary}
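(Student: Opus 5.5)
This corollary is a restatement of the PRE-Persistence Theorem in the PRE/ACT vocabulary, so the plan is simply to unpack the definitions and invoke that theorem. Fix a non-scalar characteristic-three PRE state: an element \(C_k\in G_{3^k}\) reducing to a non-scalar \(q\in GL_2(\F_3)\), of exact order \(r_0=\ord(q)\). By the Tangent torsor lemma, its determinant-compatible lifts to modulus \(3^{k+1}\) are exactly the \(27\) elements \(C_0(I+3^kX)\) with \(X\in\sltwo(\F_3)\). Fixing the determinant class to \(+1\) or \(-1\) is what membership in \(G_{3^{k+1}}\) means, and every such lift inherits the determinant class of the parent. These \(27\) lifts are therefore the children counted in the displayed branching.

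The PRE-Persistence Theorem splits these \(27\) lifts into \(9\) of exact order \(r_0\) and \(18\) of exact order \(3r_0\). I would then match each class to a label. A child of exact order \(r_0\) has the same reduction \(q\) modulo \(3\), the same order, and determinant \(\pm1\), so it is again a PRE state. A child of exact order \(3r_0\) has acquired a positive power of \(3\) relative to \(r_0\), which is precisely the definition of ACT. The one-step order-defect dichotomy shows that no other orders occur, so the two classes exhaust the \(27\) lifts and \(27=9+18\) is the complete local decomposition, written as \(\PRE\to9\PRE+18\ACT\).

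There is no real obstacle. The only point to check is that ``fixed determinant'' does not shrink the count below \(27\). It does not: determinant compatibility is already built into the tangent-torsor parametrization as the condition \(\tr X=0\), and the \(27\) lifts were counted inside that constraint.
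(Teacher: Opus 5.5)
Your proposal is correct and matches the paper's proof, which also just reads off the PRE-Persistence Theorem's split of the $27$ determinant-compatible lifts into $9$ of exact order $r_0$ (again PRE) and $18$ of exact order $3r_0$ (newly ACT). Your extra checks, that the lifts stay in the parent's determinant class and that no other orders occur, are harmless and consistent with the paper's setup.
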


\begin{proof}
This is the transition count asserted by the PRE-Persistence Theorem:
its nine order-preserving children are PRE and its eighteen children
of order $3r_0$ are newly activated.
\end{proof}

\subsection{Unrestricted Jordan lifts: a scalar obstruction criterion}
\label{sec:unrestricted-jordan}

The determinant condition in the preceding PRE-persistence theorem is
essential to that theorem's automatic solvability conclusion.  This section treats \emph{all} lifts in $GL_2$, without fixing
the determinant at the next level.  The resulting obstruction is not
automatically zero.  In particular, the statements below must not be
confused with the $27$-element determinant-compatible torsor.

\begin{lemma}[Cyclic-basis divisibility]
\label{lem:cyclic-divisibility}
Let $R=\mathbb Z/3^k\mathbb Z$ and let $A\in M_2(R)$ have nonscalar
reduction $q$ modulo $3$.  There is a vector $w\in R^2$ such that
$(w,Aw)$ is an $R$-basis.  Consequently, if
$uA+vI=I$ in $M_2(R)$, then $u=0$ and $v=1$ in $R$.
\end{lemma}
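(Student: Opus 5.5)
The plan is to pass to the residue field first and then lift using Nakayama's lemma over the local ring $R=\Z/3^k\Z$. Since $q$ is nonscalar in $M_2(\F_3)$, not every vector of $\F_3^2$ is an eigenvector of $q$. Indeed, if every nonzero vector were an eigenvector, then $q$ would be scalar: if $e_1,e_2$ and $e_1+e_2$ are all eigenvectors, their eigenvalues must coincide. So we can choose $\bar w\in\F_3^2$ with $\bar w$ and $q\bar w$ linearly independent over $\F_3$.

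Now take any lift $w\in R^2$ of $\bar w$. Consider the $2\times 2$ matrix $M=(w\mid Aw)$ over $R$. Its reduction modulo $3$ is $(\bar w\mid q\bar w)$, which has nonzero determinant in $\F_3$. Since $R$ is local with maximal ideal $3R$, an element of $R$ is a unit exactly when its reduction mod $3$ is nonzero. Hence $\det M\in R^\times$, so $M$ is invertible over $R$ and $(w,Aw)$ is an $R$-basis of $R^2$.

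For the consequence, apply the identity $uA+vI=I$ to the vector $w$. This gives $u\,Aw+v\,w=w$, that is,
\[
(v-1)w+u\,Aw=0 .
\]
Because $(w,Aw)$ is a basis, and in particular $R$-linearly independent, the coefficients vanish: $u=0$ and $v=1$ in $R$.

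The only step needing care is the existence of a non-eigenvector for a nonscalar matrix over the field $\F_3$, together with the unit criterion for determinants over $\Z/3^k\Z$. Both are elementary, so I expect no real obstacle. The proof never uses the particular prime $3$ beyond locality of $R$ and nonscalarity of the reduction.
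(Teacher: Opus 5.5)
Your proof is correct and follows essentially the same route as the paper: pick a non-eigenvector $\bar w$ of the nonscalar $q$ over $\F_3$, lift it to $w$, use that $\det(w\mid Aw)$ reduces to a nonzero element of $\F_3$ and is therefore a unit in the local ring $R$, and then apply $uA+(v-1)I$ to $w$ to conclude $u=0$, $v=1$. Your $e_1,e_2,e_1+e_2$ argument simply spells out the step ``a nonscalar matrix has a cyclic vector'' that the paper states briefly.
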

\begin{proof}
A nonscalar $2\times2$ matrix over a field has a cyclic vector:
otherwise every one-dimensional subspace would be invariant, forcing
it to be scalar.  Choose $\bar w$ with $(\bar w,q\bar w)$ linearly
independent over $\mathbb F_3$, and lift it to $w\in R^2$.
The determinant of $(w,Aw)$ is a unit in $R$, hence this pair is a
basis.  Applying $(uA+(v-1)I)$ to $w$ gives
$uAw+(v-1)w=0$; the basis property forces $u=v-1=0$.
\end{proof}

\begin{lemma}[Jordan full-matrix norm]
\label{lem:jordan-full-norm}
Let $q=\lambda I+N\in GL_2(\mathbb F_3)$, with
$N\ne0$, $N^2=0$, and $\lambda\in\mathbb F_3^\times$.
For $n=\ord(q)\in\{3,6\}$ define
\[
 \mathcal N_{q,n}(X)=\sum_{j=0}^{n-1}q^{-j}Xq^j,
 \qquad X\in M_2(\mathbb F_3).
\]
Then
\[
 \mathcal N_{q,n}(X)=\lambda^{n-1}\operatorname{tr}(NX)N.
\]
In particular its image is $\langle N\rangle$, its rank on the
four-dimensional full matrix space is one, and the trace map on its
kernel is surjective.
\end{lemma}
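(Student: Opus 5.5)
The plan is to reduce to the unipotent part and then apply the quadratic moment lemma over $\F_3$, now on all of $M_2(\F_3)$ instead of just $\sltwo(\F_3)$. Write $q=\lambda(I+U)$ with $U=\lambda^{-1}N$; this uses $\lambda^{-1}=\lambda$ in $\F_3^\times$, so in fact $U=\lambda N$. Scalars drop out of conjugation, so $q^{-j}Xq^j=(I+U)^{-j}X(I+U)^j$. Since $(I+U)^3=I$, the length-$n$ sum consists of $n/3=\ord(\lambda)$ identical blocks of length $3$. Each block is $\sum_{j\in\F_3}\bigl(X+j[X,U]-j^2UXU\bigr)$, by the same expansion as \eqref{eq:jordan-polynomial}. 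That expansion only used $U^2=0$, not tracelessness of $X$, so it holds verbatim for every $X\in M_2(\F_3)$. By the finite-field moment lemma the constant and linear moments vanish and $\sum j^2=-1$. Hence each block equals $UXU$, and
\[
\mathcal N_{q,n}(X)=\tfrac n3\,UXU=\tfrac n3\,\lambda^2 NXN=\tfrac n3\,NXN .
\]

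Next I need $NXN=\tr(NX)N$ for all $X\in M_2(\F_3)$. I would conjugate $N$ to a nonzero multiple $\mu E_{12}$, write $X=\begin{pmatrix}a&b\\c&e\end{pmatrix}$, and compute $E_{12}XE_{12}=cE_{12}$ and $\tr(E_{12}X)=c$. This gives $\mu^2cE_{12}=\tr(NX)N$. Conjugation invariance of both sides then gives the identity in general.

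It remains to match the coefficient $n/3$ with $\lambda^{n-1}$. This is the step I expect to need the most care, since it is just a two-case bookkeeping check. If $n=3$ then $\lambda=1$, and $n/3=1=\lambda^2$. If $n=6$ then $\lambda=-1$, and $n/3=2=-1=\lambda^5$ in $\F_3$. So $\mathcal N_{q,n}(X)=\lambda^{n-1}\tr(NX)N$ in both cases. I would double-check here that $\ord(q)=3\ord(\lambda)$, which holds because $I+U$ has order $3$, commutes with $\lambda I$, and $\gcd(3,\ord\lambda)=1$.

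Finally I read off the consequences. The functional $X\mapsto\tr(NX)$ is nonzero on $M_2(\F_3)$ because the trace pairing is nondegenerate. So the image is exactly $\langle N\rangle$ and the rank is one. The kernel is $\{X:\tr(NX)=0\}$, which contains $I$ since $\tr N=0$, and $\tr(I)=2\ne0$ in $\F_3$. Therefore the trace map restricted to the kernel is surjective onto $\F_3$.
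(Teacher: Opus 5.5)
Your proof is correct and follows the paper's argument essentially step for step. You reduce to the unipotent factor $I+U$ and sum the quadratic expansion over one $3$-cycle using the $\F_3$ moments, then obtain $UXU=\tr(UX)U$ from a Jordan basis, double the sum for $n=6$ to produce the $\lambda^5$ coefficient, and use $I\in\ker$ with $\tr I=2\neq0$ for trace-surjectivity. Your explicit bookkeeping ($U=\lambda N$, $\lambda^2=1$, $n/3\equiv\lambda^{n-1}$) and your nondegeneracy remark, which shows the image is all of $\langle N\rangle$, make explicit details that the paper leaves implicit.
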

\begin{proof}
Write $q=\lambda(I+U)$ with $U=\lambda^{-1}N$, so $U^2=0$.
Then $(I+U)^j=I+jU$ and $(I+U)^{-j}=I-jU$ in characteristic three.
The conjugation sum for $n=3$ is
\[
 \sum_{j=0}^2(I-jU)X(I+jU)
 =\sum_{j=0}^2\bigl(X+j(XU-UX)-j^2UXU\bigr)
 =UXU,
\]
since $\sum_{j=0}^2 j=0$ and $-\sum_{j=0}^2j^2=1$ in
$\mathbb F_3$.  For a rank-one nilpotent $U$, the identity
$UXU=\operatorname{tr}(UX)U$ follows by choosing a Jordan basis.
For $n=6$, the conjugation sum is twice the $n=3$ sum, and the
coefficient becomes $\lambda^{5}$ in the displayed formula
($\lambda=1$ when $n=3$, $\lambda=-1$ when $n=6$).
Finally $I\in\ker\mathcal N_{q,n}$ and $\operatorname{tr}(I)=2\ne0$
in $\mathbb F_3$, proving trace-surjectivity.
\end{proof}

\begin{theorem}[Unrestricted Jordan order-preserving lifts]
\label{thm:unrestricted-jordan}
Fix $k\ge1$, put $m=3^k$, and let
$A\in GL_2(\mathbb Z/m\mathbb Z)$ reduce to a nonscalar Jordan
matrix $q=\lambda I+N$ over $\mathbb F_3$.
Assume that $A$ has exact order
$n=\ord(q)\in\{3,6\}$ (a \emph{PRE} parent).
Choose any integral representative $A_0$ of $A$, and set
$t=\operatorname{tr}(A_0)$, $d=\det(A_0)$.
Define integral polynomials $u_j(t,d),v_j(t,d)$ by
\[
 (u_0,v_0)=(0,1),\qquad
 (u_1,v_1)=(1,0),\qquad
 u_{j+1}=t u_j+v_j,\quad v_{j+1}=-d u_j.
\]
Thus $A_0^n=u_n A_0+v_nI$ by Cayley--Hamilton.
By Lemma~\ref{lem:cyclic-divisibility}, $m\mid u_n$ and
$m\mid(v_n-1)$.  Define
\[
 \alpha=(u_n/m)\bmod3,\qquad
 \beta=((v_n-1)/m)\bmod3,\qquad
 s=\lambda\alpha+\beta\in\mathbb F_3.
\]
Then the following are equivalent:
\begin{enumerate}[label=(\roman*)]
\item $s=0$;
\item the order defect of $A_0$ lies in the image of the full
      first-variation operator;
\item there exists a lift $B\in GL_2(\mathbb Z/3^{k+1}\mathbb Z)$
      of $A$ with exact order $n$.
\end{enumerate}
If these conditions hold, there are exactly $27$ order-preserving
lifts among the $81$ lifts of $A$, with exactly $9$ in each of the
three possible determinant lifts.  If $s\ne0$, none of the $81$
lifts preserves order.  The condition $s=0$ is independent of the
chosen integral representative $A_0$.
\end{theorem}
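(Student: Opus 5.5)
The plan is to redo the affine first-variation analysis on the full four-dimensional tangent space $M_2(\F_3)$ instead of $\sltwo(\F_3)$. Every lift of $A$ to $\Z/3^{k+1}\Z$ can be written as $B=A_0(I+mX)$ with $X\in M_2(\F_3)$, which gives $81$ lifts. By the argument of the Affine first variation proposition, which never used tracelessness, the order defect is
\[
D_n(B)=D_0+\mathcal N_{q,n}(X),\qquad D_0=(A_0^n-I)/m \bmod 3 .
\]
The exact one-step order-defect dichotomy then says that $B$ has order $n$ exactly when this defect vanishes. So (ii)$\Leftrightarrow$(iii) reduces to solvability of the affine equation $D_0+\mathcal N_{q,n}(X)=0$.

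Next I would compute $D_0$ explicitly. From $A_0^n=u_nA_0+v_nI$ we get
\[
D_0=\alpha\,q+\beta I=\alpha N+(\lambda\alpha+\beta)I=\alpha N+sI \quad\text{in } M_2(\F_3).
\]
By Lemma~\ref{lem:jordan-full-norm} the image of $\mathcal N_{q,n}$ is $\langle N\rangle$. Since $I\notin\langle N\rangle$, we have $D_0\in\langle N\rangle$ if and only if $s=0$, which is (i)$\Leftrightarrow$(ii). I do not need the defect-axis lemma here, because the explicit form of $D_0$ replaces it.

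The counting follows from the same lemma. If $s=0$, the solution set is a coset of $\ker\mathcal N_{q,n}$, which has dimension $3$ since the rank is one on a four-dimensional space, giving $27$ order-preserving lifts. If $s\neq0$ there are none.

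For the split by determinant, note that $\det B\equiv\det A_0\,(1+m\tr X)\pmod{3m}$. The three determinant lifts therefore correspond to the three values of the functional $\tr X$. Restricted to the solution coset $X_0+\ker\mathcal N_{q,n}$, this functional is surjective onto $\F_3$, because trace is surjective on the kernel (the lemma notes $I\in\ker\mathcal N_{q,n}$). Each fiber is then a coset of the two-dimensional space $\ker\mathcal N_{q,n}\cap\ker\tr$, so it has $9$ elements.

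The remaining point is independence of $s$ from the chosen representative $A_0$. Condition (iii) refers only to $A$ and not to $A_0$, so the equivalence (i)$\Leftrightarrow$(iii) already shows that whether $s=0$ does not depend on the representative. This requires that (ii) is read for the fixed $A_0$, which is legitimate since the first-variation parametrization works for any representative.

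The main obstacle is the formula for $D_0$. I must check that reducing $u_n/m$ and $(v_n-1)/m$ modulo $3$ is well defined. Exactness holds because $A_0^n=u_nA_0+v_nI$ is an identity over $\Z$, so the claimed form of $D_0$ follows directly. The divisibility $m\mid u_n$ and $m\mid (v_n-1)$ comes from Lemma~\ref{lem:cyclic-divisibility} applied to $A$ modulo $m$. I also need $q=\lambda I+N$ to reduce $\alpha q+\beta I$ to the form above.
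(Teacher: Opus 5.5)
Your proposal is correct and follows essentially the same route as the paper. Both arguments use the four-dimensional first variation $E+\mathcal N_{q,n}(X)$ with $E=\alpha N+sI$, the image $\langle N\rangle$ from Lemma~\ref{lem:jordan-full-norm}, the $27$-element kernel coset, and the split into trace fibers via $\det B\equiv\det A_0(1+m\tr X)$. The only difference is minor: you obtain independence of the representative from (i)$\Leftrightarrow$(iii), since (iii) depends only on $A$, while the paper observes that changing $A_0$ shifts $E$ by an element of the norm image; both arguments are valid.
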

\begin{proof}
Since $A_0^n\equiv I\pmod m$, the Cayley--Hamilton recurrence and
Lemma~\ref{lem:cyclic-divisibility} give the stated divisibilities.
Modulo $3m$ the order defect is
\[
 E=\frac{A_0^n-I}{m}\bmod3
   =\alpha q+\beta I
   =\alpha N+sI.
\]
Every lift of $A$ can be written uniquely as
$B=A_0(I+mX)\pmod{3m}$, with $X\in M_2(\mathbb F_3)$.
First-order expansion gives
\[
 \frac{B^n-I}{m}\equiv E+\mathcal N_{q,n}(X)\pmod3.
\]
By Lemma~\ref{lem:jordan-full-norm}, the image of the norm is
$\langle N\rangle$.  As $I$ and $N$ are linearly independent,
$E$ belongs to that image if and only if $s=0$.
In that case the solution set is a coset of the norm kernel,
which has $3^{4-1}=27$ elements.  The trace functional is
surjective on the kernel, so each of its three trace fibers has
$27/3=9$ solutions.  Since
\[
 \det B\equiv\det A_0(1+m\operatorname{tr}X)\pmod{3m},
\]
these are exactly the three determinant fibers.  The one-step
order dichotomy identifies zero defect with exact order $n$.
Changing $A_0$ changes $E$ by an element of the norm image, hence
cannot change whether $s$ vanishes; equivalently the scalar
component of $E$ modulo $\langle N\rangle$ is intrinsic.
\end{proof}

\begin{corollary}[Reconciliation with determinant-compatible persistence]
\label{cor:reconcile}
If, in addition, $\det A=\varepsilon\in\{+1,-1\}$ modulo $3^k$,
then the $27$ lifts with determinant exactly $\varepsilon$ modulo
$3^{k+1}$ include exactly $9$ order-preserving lifts and $18$
order-increasing lifts.
\end{corollary}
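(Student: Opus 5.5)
The plan is to deduce Corollary~\ref{cor:reconcile} from the PRE-Persistence Theorem. I use Theorem~\ref{thm:unrestricted-jordan} only to check consistency. The hypothesis $\det A=\varepsilon\in\{\pm1\}$ modulo $3^k$ says exactly that $A\in G_{3^k}$. Being a PRE parent, $A$ has exact order $n=\ord(q)=r_0$, and its reduction $q=\lambda I+N$ is nonscalar. This is precisely the setting of the PRE-Persistence Theorem.

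First I identify the relevant lift set. Among all $81$ lifts $B=A_0(I+mX)$, I need those with $\det B=\varepsilon$ modulo $3^{k+1}$. I choose the reference lift $C_0$ with $\det C_0=\varepsilon$ at modulus $3^{k+1}$. Such a lift exists: adjust one entry of a cyclic-basis representative, or simply note that the determinant map on lifts is surjective onto the three determinant lifts, by $\det B\equiv\det A_0(1+m\tr X)$. Then, by the Tangent torsor lemma, the lifts with determinant exactly $\varepsilon$ are the $27$ matrices $C_0(I+mX)$ with $X\in\sltwo(\F_3)$.

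Second, the PRE-Persistence Theorem applies directly. The defect-axis lemma puts $D_{r_0}(C_0)\in\langle A_q\rangle=\langle N\rangle$. The Jordan full-order transfer at $p=3$ has image $\langle U\rangle=\langle N\rangle$ and rank one. Hence the affine equation is solvable, with solution set a coset of a $9$-element kernel. The one-step dichotomy then gives $9$ lifts of order $n$ and $18$ of order $3n$.

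As a consistency check against Theorem~\ref{thm:unrestricted-jordan}, I show that the determinant-$\pm1$ hypothesis forces $s=0$. The defect $E=\alpha N+sI$ of a reference lift $C_0$ with $\det C_0=\varepsilon$ is traceless by the defect-axis lemma, since $\det C_0^n=\varepsilon^n=1$ ($n$ is even if $\varepsilon=-1$). Because $\tr N=0$ and $\tr I=2\neq0$, this gives $s=0$. Theorem~\ref{thm:unrestricted-jordan} then yields $9$ order-preserving lifts in each determinant fiber, in particular in the $\varepsilon$-fiber, and the remaining $18$ lifts in that fiber increase the order. The only point needing care is the existence of $C_0$ with determinant exactly $\varepsilon$, which I expect to be the main (mild) obstacle. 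It is handled by the trace-surjectivity of $X\mapsto\tr X$ on $M_2(\F_3)$ applied to $\det B\equiv\det A_0(1+m\tr X)$, using $\det A_0\equiv\varepsilon\pmod m$.
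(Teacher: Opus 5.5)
Your proof is correct, but your main route differs from the paper's; your ``consistency check'' is essentially the paper's proof.

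\textbf{Your main route.} You specialize the PRE-Persistence Theorem to a Jordan reduction. Since $A_q=N$ and the $p=3$ Jordan transfer has image $\langle N\rangle$ and rank one, the defect-axis lemma makes the affine equation solvable. The $9/18$ split then follows from the one-step dichotomy.

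\textbf{The paper's route.} The paper derives the corollary from Theorem~\ref{thm:unrestricted-jordan}. It takes $A_0$ with $\det A_0\equiv\varepsilon \pmod{3m}$, which forces $\tr E=0$. Since $E=\alpha N+sI$ has trace $2s$, this gives $s=0$. The theorem's ``nine per determinant fiber'' count then finishes the proof.

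\textbf{Comparison.} Your route is shorter and shows that, as a count, the corollary is just a special case of PRE-Persistence. The paper's route is what actually does the reconciling. It shows that the determinant constraint is exactly the mechanism that kills the scalar obstruction $s$. It also places the fixed-determinant $9$ as one of the three fibers of $9$ inside the unrestricted $27$ order-preserving lifts.

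You also justify the existence of a reference lift with determinant exactly $\varepsilon$ modulo $3^{k+1}$, using $\det B\equiv\det A_0(1+m\tr X)$ and surjectivity of the trace. The paper simply asserts this.

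One small observation: for Jordan $q=\lambda I+N$ one has $\det q=\lambda^2=1$ in $\F_3$. So necessarily $\varepsilon=+1$, and your parenthetical about the case $\varepsilon=-1$ is vacuous, though harmless.
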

\begin{proof}
Choose a reference lift $A_0$ with determinant exactly
$\varepsilon$ modulo $3m$.  As $n=3$ or $6$ and
$\varepsilon^n=1$, we have
$\det(A_0^n)=1\pmod{3m}$, so $\operatorname{tr}E=0$.
But $E=\alpha N+sI$ has trace $2s$ in $\mathbb F_3$.
Thus $s=0$, and the theorem gives nine solutions in the
prescribed determinant fiber.  The other eighteen lifts in that
fiber increase the order by three.
\end{proof}

\begin{remark}[Why the two enumerations are different]
For the $16$ nonscalar Jordan roots modulo $3$, there are $432$
order-preserving first-level lifts if all three determinant fibers
are admitted.  At modulus $9$, an exhaustive audit classifies these
$432$ parents as $144$ with $s=0$ and $288$ with $s\ne0$;
their unrestricted next-level order-preserving counts are respectively
$27$ and $0$.  This does \emph{not} contradict the determinant-constrained
PRE theorem: the obstructed parents have determinant outside the
fixed $\pm1$ classes.  The finite counts corroborate, but are not
used in the proof of, Theorem~\ref{thm:unrestricted-jordan}.
The observed $9$-versus-$18$ division of the $27$ unrestricted
order-preserving children into next-level unobstructed and obstructed
parents is a separate finite observation, not asserted here as an
all-level branching theorem.
\end{remark}

\subsection{Absorbing activation}

\begin{lemma}[Characteristic-three propagation]
\label{lem:cubic-propagation}
Let $k\ge1$ and let $E$ be a matrix over the integers (or over a
compatible $3$-adic matrix ring). Then
\[
 (I+3^kE)^3\equiv I+3^{k+1}E\pmod{3^{k+2}}.
\]
\end{lemma}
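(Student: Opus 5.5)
The plan is a direct binomial expansion. Write $Y=3^kE$ and note that $I$ commutes with $Y$, so the ordinary binomial theorem applies in the (possibly noncommutative) matrix ring:
\[
 (I+3^kE)^3=I+3\cdot3^kE+3\cdot3^{2k}E^2+3^{3k}E^3
 =I+3^{k+1}E+3^{2k+1}E^2+3^{3k}E^3.
\]
No commutativity between distinct matrices is needed, since only powers of the single matrix $E$ occur.

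It then remains to check that the last two terms vanish modulo $3^{k+2}$, which is a comparison of exponents using $k\ge1$. For the quadratic term, $2k+1\ge k+2$ is equivalent to $k\ge1$, so $3^{2k+1}E^2\equiv0\pmod{3^{k+2}}$. For the cubic term, $3k\ge k+2$ is equivalent to $k\ge1$, so $3^{3k}E^3\equiv0\pmod{3^{k+2}}$. Both inequalities are sharp at $k=1$, where the exponents are $3=k+2$ in each case, so the hypothesis $k\ge1$ is exactly what is needed. Dropping the two vanishing terms gives the stated congruence $(I+3^kE)^3\equiv I+3^{k+1}E\pmod{3^{k+2}}$.

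The argument has no real obstacle. The only points requiring care are the edge case $k=1$ for the cubic term, where the exponent bound is tight rather than strict, and the observation that the same computation holds verbatim over any $3$-adic matrix ring in which the congruence is interpreted entrywise. The latter is why the statement allows a compatible $3$-adic matrix ring in place of integer matrices.
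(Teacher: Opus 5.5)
Your proposal is correct and matches the paper's proof essentially verbatim. Both expand $(I+3^kE)^3$ binomially (valid because $I$ commutes with $E$) and discard the terms $3^{2k+1}E^2$ and $3^{3k}E^3$ using $2k+1\ge k+2$ and $3k\ge k+2$ for $k\ge1$.
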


\begin{proof}
The binomial expansion is valid because $I$ commutes with $E$:
\[
 (I+3^kE)^3=I+3^{k+1}E+3^{2k+1}E^2+3^{3k}E^3.
\]
For $k\ge1$ both $2k+1\ge k+2$ and $3k\ge k+2$.
\end{proof}

\begin{lemma}[First-activation boundary and universal propagation]
\label{lem:first-act-boundary}
Let $k\ge1$, and let $B$ be a matrix modulo $3^{k+1}$ whose reduction
modulo $3^k$ has exact order $r$. Suppose that $B$ has exact order
$3r$ modulo $3^{k+1}$. Then there is a uniquely determined nonzero
matrix $D\in M_2(\F_3)$ such that
\[
 B^r\equiv I+3^kD\pmod{3^{k+1}},\qquad D\ne0.
\]
For \emph{every} lift $\widetilde B$ of $B$ modulo $3^{k+2}$,
regardless of determinant restrictions, one has
\[
 \boxed{\widetilde B^{3r}\equiv
 I+3^{k+1}D\pmod{3^{k+2}}.}
\]
In particular, every such lift has exact order $9r$ modulo $3^{k+2}$.
More generally, for any $n\ge0$ and any lift $\widehat B_n$ of $B$
modulo $3^{k+2+n}$,
\[
 \boxed{\widehat B_n^{3^{n+1}r}\equiv
 I+3^{k+1+n}D\pmod{3^{k+2+n}}.}
\]
Consequently its reduction modulo $3^{k+1+n}$ has exact order
$3^{n+1}r$.
\end{lemma}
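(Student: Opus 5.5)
The plan is to get $D$ from the one-step dichotomy and then push it upward with the cubic propagation Lemma~\ref{lem:cubic-propagation}, inducting on $n$. The point that needs care is that the conclusion holds for \emph{every} lift. This should be automatic, because the only data entering the power $\widetilde B^{3r}$ modulo $3^{k+2}$ is $B$ modulo $3^{k+1}$.

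\textbf{Step 1: existence and uniqueness of $D$.} Since the reduction of $B$ modulo $3^k$ has order $r$, we have $B^r\equiv I\pmod{3^k}$. Hence $B^r\equiv I+3^kD\pmod{3^{k+1}}$ for a unique $D\in M_2(\F_3)$; uniqueness holds because $3^kD$ modulo $3^{k+1}$ determines $D$ modulo $3$. By the exact one-step order-defect dichotomy with $m=3^k$, exact order $3r$ forces $D\ne0$.

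\textbf{Step 2: the case $n=0$.} Let $\widetilde B$ be any lift modulo $3^{k+2}$. Then
\[
\widetilde B^r\equiv I+3^kE \pmod{3^{k+2}}
\]
for some integral matrix $E$ with $E\equiv D\pmod 3$. This uses only that $\widetilde B\equiv B\pmod{3^{k+1}}$, so the determinant of the lift plays no role. Lemma~\ref{lem:cubic-propagation} then gives
\[
\widetilde B^{3r}\equiv I+3^{k+1}E\equiv I+3^{k+1}D\pmod{3^{k+2}}.
\]
Here $3^{k+1}E$ depends only on $E$ modulo $3$.

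For the order statement, $\widetilde B$ reduces modulo $3^{k+1}$ to $B$, which has exact order $3r$. Apply the dichotomy again at $m=3^{k+1}$ with exponent $3r$ and defect $D\ne0$. This gives exact order $9r$ modulo $3^{k+2}$.

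\textbf{Step 3: induction on $n$.} The inductive claim is: for any lift $\widehat B_n$ modulo $3^{k+2+n}$,
\[
\widehat B_n^{3^{n}r}\equiv I+3^{k+n}E_n \pmod{3^{k+1+n}}
\]
with $E_n\equiv D\pmod 3$. To make the induction run, I will strengthen this to a congruence modulo $3^{k+2+n}$ in which only the residue of $E_n$ modulo $3$ is controlled.

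The case $n=0$ is Step 1 lifted to $\widehat B_0$. For the inductive step, apply Lemma~\ref{lem:cubic-propagation} with exponent $k+n\ge1$:
\[
(\widehat B_n^{3^nr})^3\equiv I+3^{k+n+1}E_n\pmod{3^{k+n+2}}.
\]
Read modulo $3^{k+2+n}$, this is exactly the boxed identity, since $3^{k+1+n}E_n\equiv 3^{k+1+n}D$. Reducing a lift modulo $3^{k+2+n+1}$ to level $k+2+n$ passes the statement up one step.

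\textbf{Step 4: the exact-order consequence.} The consequence follows by iterating the dichotomy. Suppose the reduction modulo $3^{k+n}$ has order $3^nr$, which holds by induction. The defect at exponent $3^nr$ modulo $3^{k+1+n}$ is $D\ne0$, so the order modulo $3^{k+1+n}$ is exactly $3^{n+1}r$.

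\textbf{Main obstacle.} The main obstacle is bookkeeping rather than substance. Lemma~\ref{lem:cubic-propagation} needs the input congruence $X\equiv I+3^jE$ to hold modulo a high enough power of $3$, so that the cube is controlled modulo $3^{j+2}$. I will handle this by writing $\widehat B_n^{3^nr}=I+3^{k+n}E_n$ \emph{exactly} as an integral matrix identity, after choosing an integral representative. Then the lemma applies verbatim, and only $E_n\bmod 3$ matters in the output.
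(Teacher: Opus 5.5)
Your proposal is correct and follows the paper's proof essentially step for step. You obtain $D$ from the one-step dichotomy, apply the cubic propagation Lemma~\ref{lem:cubic-propagation} to $\widetilde B^r\equiv I+3^kE$ with $E\equiv D\pmod 3$ (the paper writes $D+3E$) to get the boxed congruence with no determinant condition, and then induct on the level using the dichotomy for the exact orders, where your explicit integral-representative bookkeeping just spells out the paper's ``same argument applies with $k$ increased by one.''
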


\begin{proof}
Since $B$ reduces to a matrix of order $r$, the one-step order-defect
dichotomy implies that $B^r\equiv I+3^kD\pmod{3^{k+1}}$ for a unique
$D\in M_2(\F_3)$, and that $D\ne0$ because the order of $B$ is $3r$.
For any next lift $\widetilde B$ there is a matrix $E$ such that
\[
 \widetilde B^r\equiv I+3^kD+3^{k+1}E\pmod{3^{k+2}}.
\]
Apply Lemma~\ref{lem:cubic-propagation} to the matrix
$D+3E$ to obtain
\[
 \widetilde B^{3r}=(\widetilde B^r)^3
 \equiv I+3^{k+1}(D+3E)
 \equiv I+3^{k+1}D\pmod{3^{k+2}}.
\]
The right-hand defect is nonzero, hence the exact one-step dichotomy,
applied to the parent order $3r$, forces exact order $9r$.
At each successive level the same argument applies with $k$ increased
by one and the current order multiplied by three. The leading matrix
$D\pmod3$ is unchanged. Induction proves the asserted formula and
exact orders. No enumeration, rank assumption, or determinant condition
is used in this propagation step.
\end{proof}

\begin{proposition}[Absorbing activated step]
\label{prop:absorbing-act}
Let $q\in GL_2(\F_3)$ be nonscalar, with $r_0=\ord(q)$.
Consider any determinant-compatible lifting tower rooted at $q$.
If at some step the exact order first increases from $r_0$ to $3r_0$,
then every determinant-compatible descendant at every later level
has its order multiplied by three at each subsequent step.
In particular every activated parent has exactly $27$ activated
children, and the transition $\ACT\to27\ACT$ is absorbing.
\end{proposition}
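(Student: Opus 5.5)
The plan is to reduce the proposition directly to Lemma~\ref{lem:first-act-boundary}, which carries essentially all of the content. Suppose the tower is rooted at $q$ and that the first increase happens at the step from modulus $3^k$ to $3^{k+1}$, with $k\ge1$. We then have a parent $C_k\in G_{3^k}$ of exact order $r_0$ and a determinant-compatible child $B\in G_{3^{k+1}}$ of exact order $3r_0$. By the exact one-step order-defect dichotomy these are the only two possibilities. Hence $B^{r_0}\equiv I+3^kD\pmod{3^{k+1}}$ with $D\ne0$, and this $D$ is the datum appearing in the boundary lemma with $r=r_0$.

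Next, I apply Lemma~\ref{lem:first-act-boundary} to $B$. For every $n\ge0$ and every lift $\widehat B_n$ of $B$ modulo $3^{k+2+n}$, the lemma gives $\widehat B_n^{3^{n+1}r_0}\equiv I+3^{k+1+n}D$ with $D\ne0$. A determinant-compatible descendant at level $3^{k+1+n}$ is in particular such a lift. Applying the dichotomy with parent order $3^{n}r_0$ at modulus $3^{k+n}$ shows that each such descendant has exact order $3^{n}r_0$ at modulus $3^{k+n}$. The same congruence shows that it passes to exact order $3^{n+1}r_0$ at the next level. So at every subsequent step the order is multiplied by three, and this holds for every descendant, not just for some of them. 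I will check the index bookkeeping carefully at this point, since the lemma is phrased with shifted indices. Concretely, the descendant at modulus $3^{k+1+n}$ arises as the reduction of a lift to $3^{k+2+n}$, and every element at the intermediate level extends to the next level, so no descendant is missed.

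For the count, I use the tangent torsor lemma. It shows that each determinant-compatible parent in $G_{3^{j}}$ has exactly $3^3=27$ determinant-compatible lifts to $G_{3^{j+1}}$. When the parent is activated, meaning it is $B$ or any later descendant, each of these $27$ children is a lift of $B$. By the universal propagation above, every one of them has order three times the parent's, so all $27$ are activated. This gives the transition $\ACT\to27\ACT$. Absorption follows because activated children are again activated parents of the same kind; equivalently, one can invoke the lemma directly at every level.

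The main obstacle is a bookkeeping subtlety: each later activated node must be treated as a lift of the first activated node $B$, rather than re-applying the lemma from scratch. Re-applying it from scratch would require a new ``first increase'' hypothesis, which fails because the parent is no longer of order $r_0$. I intend to handle this by always quoting the $n$-indexed form of Lemma~\ref{lem:first-act-boundary}, rooted at the fixed $B$, which makes no determinant or rank assumption. Alternatively, one can observe that the lemma's hypothesis, namely exact order $r$ mod $3^k$ and $3r$ mod $3^{k+1}$, holds afresh at each activated node with $r$ replaced by the current order. Either route suffices.
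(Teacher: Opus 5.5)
Your proposal is correct and takes the same route as the paper: get the nonzero defect $D$ at the first activation from the one-step dichotomy, apply Lemma~\ref{lem:first-act-boundary} to every lift of the first activated node $B$ (and hence to every later descendant), and count the $27$ children with the tangent-torsor lemma. The one flaw is an off-by-one index slip in your second paragraph: a descendant at modulus $3^{k+1+n}$ is not itself a lift modulo $3^{k+2+n}$. Your own fix repairs this, namely treating the descendant as the reduction of such a lift, or equivalently reading off its exact order directly from the lemma at modulus $3^{k+2+n}$.
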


\begin{proof}
The first order increase supplies a nonzero defect by the exact
one-step dichotomy. Lemma~\ref{lem:first-act-boundary} applies to
\emph{every} next lift and recursively to every subsequent lift,
showing that the current order triples at each level. The number
of determinant-compatible children of a parent is $3^3=27$ by the
tangent-torsor lemma. Thus all $27$ children are activated and the
argument repeats indefinitely.
\end{proof}

\begin{remark}[Transfer cancellation is consistent but not required]
If the current order is $R=3^a r_0$, $a\ge1$, then
\[
 N_q^{(R)}=3^aN_q^{(r_0)}=0\quad\text{over }\F_3.
\]
Thus all children have the same first-order $R$-defect. The boundary
lemma proves more: that common defect is necessarily nonzero after
first activation, and gives its exact value at every later level.
This removes the nonzero-defect assumption from the earlier
conditional activated-step formulation.
\end{remark}

\begin{theorem}[Recursive branching with fixed determinant]
\label{thm:unconditional-branching}
Consider a characteristic-three lifting tower with nonscalar reduction
and determinant fixed to $+1$ or $-1$ at every level. Let $\PRE$
denote exact order $r_0$, and $\ACT$ an order increased by a positive
power of three. Then
\[
 \boxed{\PRE\longrightarrow9\PRE+18\ACT,\qquad
        \ACT\longrightarrow27\ACT.}
\]
\end{theorem}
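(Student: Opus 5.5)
The plan is to assemble the theorem from two results already established: the PRE-Persistence Theorem (with its corollary on fixed-determinant PRE steps) and Proposition~\ref{prop:absorbing-act}. The only new work is to check that the hypotheses of those results hold at every level of a tower with determinant fixed to $+1$ or $-1$ and nonscalar reduction $q$, and that the labels $\PRE$ and $\ACT$ exhaust the possible states.

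\emph{PRE transition.} Take a state $C_k\in G_{3^k}$ of exact order $r_0=\ord(q)$ reducing to $q$. By the Tangent torsor lemma, its determinant-compatible children are exactly the $27$ elements $C_0(I+3^kX)$ with $X\in\sltwo(\F_3)$. The PRE-Persistence Theorem says that exactly $9$ of these have order $r_0$ and $18$ have order $3r_0$. The nine children of order $r_0$ are PRE by definition. The eighteen of order $3r_0$ have acquired a positive power of $3$, so they are ACT. This gives $\PRE\to9\PRE+18\ACT$.

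\emph{ACT transition.} An ACT state has order $3^a r_0$ with $a\ge1$. Consider the first level at which its ancestry changed order from $r_0$ to $3r_0$. By the exact one-step dichotomy, that step supplies a nonzero defect. Lemma~\ref{lem:first-act-boundary} then shows that every later lift, whatever its determinant, triples the order at each step. So each of the $27$ determinant-compatible children of an ACT state has order $3^{a+1}r_0$ and is again ACT. This is the argument of Proposition~\ref{prop:absorbing-act}, and it gives $\ACT\to27\ACT$. One detail needs care. Lemma~\ref{lem:first-act-boundary} controls descendants of the first activated state, so I will observe that any ACT state at a later level is itself a lift, at some depth $n$, of that first activated state $B$. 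The displayed formula for $\widehat B_n$ then applies directly, and the orders propagate along the whole tower rather than only one step.

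The step I expect to need the most attention is the induction on levels: the hypotheses of the PRE-Persistence Theorem must be re-established at each level. These hypotheses are nonscalar reduction $q$, determinant $\pm1$, and exact order $r_0$. All three are inherited by the nine PRE children, since reduction modulo $3$ is unchanged and determinant compatibility is imposed by construction. The remaining point is exhaustiveness. By the one-step dichotomy every order is $3^a r_0$ for some $a\ge 0$, so every state is PRE ($a=0$) or ACT ($a\ge1$). Induction on $k$ then yields both transition rules at every level.
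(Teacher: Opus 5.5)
Your proposal is correct and takes the same route as the paper. The $9+18$ split comes from the PRE-Persistence Theorem, the absorbing $\ACT\to27\ACT$ step comes from Lemma~\ref{lem:first-act-boundary} via Proposition~\ref{prop:absorbing-act}, and the count of $27$ children comes from the tangent-torsor lemma; your checks of exhaustiveness of the PRE/ACT labels and of the hypotheses at each level spell out what the paper's short proof leaves implicit.
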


\begin{proof}
The PRE-Persistence Theorem supplies exactly nine order-preserving
and eighteen first-activated children of each PRE parent.
Proposition~\ref{prop:absorbing-act} shows that each such activated
child and all its descendants have strictly increasing powers of three
in their exact orders. The tangent-torsor lemma gives exactly $27$
children per parent. These are the two claimed transitions.
\end{proof}

\section{Consequences and discussion}
\subsection{Counting consequences}

Let \(P_n\) and \(A_n\) denote the numbers of PRE and ACT descendants
after \(n\) further lifting levels.  Then
\[
 \begin{pmatrix}P_{n+1}\\A_{n+1}\end{pmatrix}
 =
 \begin{pmatrix}9&0\\18&27\end{pmatrix}
 \begin{pmatrix}P_n\\A_n\end{pmatrix}.
\]
After normalization by \(27\),
\[
 \begin{pmatrix}1/3&0\\2/3&1\end{pmatrix}
\]
is the transition matrix for the corresponding proportions.

Starting from a PRE root, the surviving PRE proportion after \(d\)
levels is
\[
 3^{-d}.
\]
If each of the $27$ determinant-compatible children is selected with equal
probability, independently at each level, the first-activation depth
therefore has distribution
\[
 \Pr(D=d)=\frac{2}{3^d},\qquad d=1,2,\ldots,
\]
and
\[
 \mathbb E[D]=\frac32.
\]

\subsection{Discussion and scope}
The order defect varies affinely by the cyclic adjoint transfer. This
connects a linear-algebraic image and kernel computation to exact counts
of order-preserving and order-increasing lifts. In characteristic three,
the nonscalar fixed-determinant case yields the branching law of
Theorem~\ref{thm:unconditional-branching}; it is not asserted for scalar
reductions, arbitrary odd primes, or unrestricted determinant fibers.

For $p>3$, the nonscalar Jordan transfer vanishes at first order,
but the formal perturbative response has the three depths of
Theorem~\ref{thm:jordan-three-layer}. Infinite depth means exact
vanishing of this particular formal matrix response, not an assertion
about physical observability. These formulas use classical congruence,
Jordan, and modular-transfer ingredients; historical priority of their
particular combination is not established here.

\subsection{Computational checks}
Finite computations were used to test special cases and detect errors,
not as premises of the proofs. Checks included the transfer ranks,
first-variation identity, characteristic-three lift counts, cubic
defect propagation, and Jordan perturbative coefficients at selected
small odd primes. Reproducibility requires publication of the exact
scripts, test parameters, and outputs; this manuscript does not claim
that such a companion archive is currently available.

\appendix
\section{Finite verification: scope}
The algebraic statements in the main text are proved without relying
on enumeration. Auxiliary finite checks covered the characteristic-three
nonscalar matrices, determinant-compatible lifts at small levels, and
selected Jordan perturbations for $p=3,5,7,11,13$. These checks are
limited to the tested parameters and are not proofs for all primes or
all congruence levels. No particular public code archive, checksum
manifest, or permanent identifier is asserted in this version.

\section*{Declaration of AI assistance}
OpenAI ChatGPT was used for mathematical exploration, drafting, and
programming assistance. The author is responsible for the submitted
claims, proofs, references, and text. Computational checks and AI
assistance do not replace independent mathematical verification.

\end{document}